\newtheorem{theorem}{Theorem}[section]
\newtheorem{lemma}[theorem]{Lemma}
\newtheorem{corollary}[theorem]{Corollary}
\newtheorem{definition}[theorem]{Definition}
\theoremstyle{definition}
\newtheorem{remark}[theorem]{Remark}
\newtheorem{example}[theorem]{Example}
\newcommand{\xysquare}[8]{
\[\xymatrix{
#1 \ar@{#5}[r] \ar@{#6}[d] & #2 \ar@{#7}[d]\\
#3 \ar@{#8}[r] & #4
}\]
}
\DeclareMathOperator*{\holim}{\operatorname*{holim}}
\newcommand{\bb}{\mathbb}
\newcommand{\comment}[1]{}
\newcommand{\ep}{\varepsilon}
\newcommand{\into}{\hookrightarrow}
\newcommand{\isoto}{\stackrel{\simeq}{\to}}
\newcommand{\Isoto}{\stackrel{\simeq}{\longrightarrow}}
\newcommand{\onto}{\twoheadrightarrow}
\newcommand{\op}{\operatorname}
\renewcommand{\phi}{\varphi}
\newcommand{\quis}{\stackrel{\sim}{\to}}
\newcommand{\roi}{\mathcal{O}}
\newcommand{\sub}[1]{{\mbox{\rm \scriptsize #1}}}
\newcommand{\To}{\longrightarrow}
\newcommand{\xto}{\xrightarrow}
\newcommand{\TC}{T\!C}
\newcommand{\cal}{\mathcal}
\renewcommand{\hat}{\widehat}
\renewcommand{\frak}{\mathfrak}
\renewcommand{\ker}{\operatorname{Ker}}
\newcommand{\coker}{\operatorname{Coker}}
\DeclareMathOperator{\Spec}{Spec}
\DeclareMathOperator{\Tor}{Tor}
\newcommand{\xTo}[1]{\stackrel{#1}{\To}}
\begin{document}

\title[A historical overview of pro cdh descent in algebraic $K$-theory]{A historical overview of pro cdh descent in algebraic $K$-theory and its relation to rigid analytic varieties}

\author{M.~Morrow}

\begin{abstract}
This note was prepared after the workshop on cdh descent and algebraic $K$-theory in Hara-mura, Japan (1 -- 5 Sept.~2016), to complement the author's talk on the history and applications of pro cdh descent. In the final section pro cdh descent is used to define $K$-groups of rigid analytic varieties.

This note was revised and released in light of Kerz, Strunk, and Tamme establishing pro cdh descent in general \cite{KerzStrunkTamme2016}.
\end{abstract}

%\classno{}
\date{}
%\extraline{}

\maketitle

%\tableofcontents

\setcounter{section}{-1}

\section{Introduction}
We begin by recalling that an {\em abstract blow-up square} $\Sigma$ of schemes (always quasi-compact and quasi-separated in this note)
\[\xymatrix{
Y'\ar[d]\ar[r] & X'\ar[d]^f\\
Y\ar[r] & X
}\]
is a cartesian square in which the horizontal arrows are closed embeddings and $f$ is a proper morphism which induces an isomorphism $X'\setminus Y'\isoto X\setminus Y$. For example, $X'$ could be the blow-up of $X$ with centre $Y$. We say that $K$-theory (always non-connective in this note) satisfies {\em pro cdh descent} with respect to this square if and only if the resulting map of pro abelian relative $K$-groups \[\{K_n(X,Y_s)\}_s\To \{K_n(X',Y_s')\}_s\] is an isomorphism for all $n\in\bb Z$.\footnote{Equivalently, the pro abelian group $\{K_n(X,X',Y_s)\}_s$ vanishes for each $n\in\bb Z$, where $K_n(X,X',Y_s):=\pi_n\op{hofib}(K(X,Y_s)\to K(X',Y'_s))$. The definition is also equivalent to asking that the square of pro spectra
\[\xymatrix{
K(X)\ar[d]\ar[r] & \{K(Y'_s)\}_s\ar[d]\\
K(X')\ar[r] & \{K(Y_s)\}_s
}\]
be homotopy cartesian with respect to Fausk--Isaksen's $\pi_*$-model structure on the category of pro spectra \cite[\S1.1]{FauskIsaksen2007}. Indeed, weak equivalences of pro spectra with respect to this model structure are maps which induce isomorphism on all pro homotopy groups, together with a stability condition on the behaviour of $s$ (the index of the pro spectra) as $n$ (the degree of the homotopy group) tends to $-\infty$ (to be more precise, this stability condition is that the map should be isomorphic to a level-wise $N$-weak-equivalence for some $N\in\bb Z$). But this stability condition is redundant in our situation since $K_n(Y_s)=K_n(Y)$ and $K_n(Y_s')=K_n(Y')$ for $n\ll0$ (independently of $s$); indeed, since our schemes are qcqs, this follows from the nil-invariance of negative $K$-theory on rings and Zariski descent.
} Here $Y_s$ denote the $s-1^\sub{st}$-infinitesimal thickening of $Y$ inside $X$, and similarly for $Y'$.

It was an open problem until very recently to prove that $K$-theory satisfies pro cdh descent with respect to any abstract blow-up square of Noetherian schemes, but this has now been positively settled by Kerz, Strunk, and Tamme \cite{KerzStrunkTamme2016}:

\begin{theorem}[Kerz--Strunk--Tamme]
Suppose $\Sigma$ is an abtract blow-up square of Noetherian schemes; then $K$-theory satisfies pro cdh descent with respect to $\Sigma$.
\end{theorem}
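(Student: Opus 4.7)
The plan is to follow the derived-algebraic-geometric strategy of Kerz--Strunk--Tamme. The first step is a reduction to the affine case: since both $K$-theory and the formation of infinitesimal thickenings respect Zariski (in fact Nisnevich) localisation, and the pro cdh property is local on $X$, one reduces to the situation $X=\Spec A$ for a Noetherian ring $A$, with $Y=V(I)$ for some ideal $I\sseq A$, and $X'$ a proper $A$-scheme that is an isomorphism over $X\setminus Y$.

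The heart of the argument replaces the classical blow-up by a \emph{derived} blow-up. One works in the setting of animated (a.k.a.\ simplicial commutative) rings and forms $\mathrm{Bl}^{\mathrm{der}}_{I}A$, a derived scheme whose underlying classical object is the usual blow-up but whose derived structure sheaf encodes the higher Tor-vanishing data of the exceptional divisor. The key feature is that the corresponding square of derived schemes is ``Milnor-like'' in a derived sense, so that one may apply an excision / Milnor-patching theorem for the non-connective $K$-theory of animated rings---this is essentially the Land--Tamme style derived excision result---to obtain descent with respect to the derived blow-up square.

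The final step compares the derived picture with the classical one. The derived blow-up and the classical blow-up agree away from $Y$ and differ on the exceptional divisor only by higher Tor terms controlled by the $I$-adic filtration. An Artin--Rees argument on the Rees algebra $\bigoplus_{s\ge 0} I^s$ shows that the pro systems $\{K(X,Y_s)\}_s$ and $\{K(X',Y'_s)\}_s$ formed from classical infinitesimal thickenings are pro-isomorphic to their derived analogues. Substituting these comparisons into the derived descent statement yields the classical pro cdh descent.

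The main obstacle is the derived excision step: Milnor-patching for $K$-theory fails for arbitrary classical Milnor squares (this is precisely why pro cdh descent was open for so long), so one must genuinely exploit the fact that the derived blow-up square is cartesian in the derived sense---the relevant Tors vanish by construction---and apply the refined excision theorem for animated rings. By contrast, the reduction to affine schemes and the Artin--Rees comparison between classical and derived thickenings are essentially formal consequences of Noetherianness.
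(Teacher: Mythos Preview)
The paper does not actually prove this theorem: both occurrences (in the introduction and as Theorem~\ref{theorem_KST}) are stated without proof and simply cite \cite{KerzStrunkTamme2016}. So there is no ``paper's own proof'' to compare against; one can only assess your sketch against the actual Kerz--Strunk--Tamme argument.

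Your outline correctly identifies the overall architecture---reduce to the affine case, pass to a derived blow-up, then compare the derived and classical pictures via an Artin--Rees / pro argument---but the middle step is misdescribed in a way that matters. The derived blow-up square is \emph{not} a Milnor square, derived or otherwise: the map $\mathrm{Bl}^{\mathrm{der}}_{I}A\to\Spec A$ is proper and non-affine, so no ``Land--Tamme style derived excision'' for pushouts of animated rings applies here (and that result in any case postdates \cite{KerzStrunkTamme2016}). What Kerz--Strunk--Tamme actually prove is a Thomason-type formula: choosing generators $a_1,\dots,a_c$ of $I$, the derived blow-up along the Koszul-type derived quotient $A/\!\!/(a_1,\dots,a_c)$ has exceptional divisor which is a genuine projective bundle $\bb P^{c-1}$ over the derived centre, because the derived centre is quasi-smoothly embedded by construction. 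One then obtains a semi-orthogonal decomposition of perfect complexes on the derived blow-up, exactly as in Thomason's computation for blow-ups along regular immersions, and this yields (non-pro) descent for the derived blow-up square directly.

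The final comparison step you describe is essentially right in spirit: the underlying classical scheme of the derived blow-up agrees with the classical blow-up, and the pro-systems of infinitesimal thickenings of the exceptional divisor are pro-isomorphic, which is where the Noetherian hypothesis and Artin--Rees enter. But without the correct mechanism for descent on the derived side, your sketch has a genuine gap at its core.
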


In this note we will review some history concerning pro cdh descent (and pro excision) and explain how it relates to the $K$-theory of rigid analytic varieties.

\subsection*{Acknowledgements}
I am extremely grateful to the organisers Wataru Kai, Hiroyasu Miyazaki, and Ryomei Iwasa for their invitation to aforementioned workshop in Hara-mura, and thank all the participants for their questions and interest in the subject.

I would also like to thank Moritz Kerz, Shuji Saito, and Georg Tamme for various discussions about their own ongoing approach to the $K$-theory of rigid analytic spaces.

\section{Classical results in low degree}
We begin with a review of some more ``classical'' (1960s--2000s) results in the subject concerning pro excision and pro cdh descent in low degree, many of which have elementary proofs.

\subsection{Pro excision}
Suppose that $f:A\to B$ is a homomorphism of (always commutative, in this note, for simplicity) rings and $I\subseteq A$ is an ideal with the following properties: the homomorphism $f$ is injective on $I$ (i.e., $I\cap\ker f=0$), and the image $f(I)$ (which we henceforth identify with $I$) is an ideal of $B$. For brevity, it is convenient to call this an {\em excision situation}. Note that if $A\to B$, $I$ is an excision situation, then so is $A\to B$, $I^s$ for each $s\ge1$.

\begin{example}
The following are the two main examples:
\begin{enumerate}
\item $A\subseteq B$ is an extension of rings and $I$ is any ideal of $B$ which is contained inside $A$, such as the {\em conductor ideal} $I=\op{Ann}_AB/A$ ($=\{a\in A:aB\subseteq A\}$).
\item $A\onto B=A/J$ is a surjection of rings and $I$ is any ideal of $A$ satisfying $I\cap J=0$.
\end{enumerate}
In practice it is typically sufficient to treat these two special cases, since if $f:A\to B$, $I$ is an excision situation, then so are $A\onto f(A)$, $I$ and $f(A)\into B$, $f(I)$.
\end{example}

The resulting square
\[\xymatrix{
A\ar[r]^f\ar[d] & B \ar[d] \\
A/I\ar[r] & B/I
}\]
is often called a Milnor, or excision, square\footnote{The reader may wish to check that this square is both cartesian and cocartesian in the category of rings (as well as in the categories of $A$-modules and groups.} in honour of \cite[\S2]{Milnor1971}. We say that $K$-theory satisfies {\em pro excision} with respect to the given situation if and only if the resulting map of pro abelian groups $\{K_n(A,I^s)\}_s\to \{K_n(B,I^s)\}_s$ is an isomorphism for all $n\in\bb Z$.\footnote{Equivalently, the pro abelian group $\{K_n(A,B,I^s)\}_s$ vanishes for each $n\in\bb Z$, where $K_n(A,B,I^s):=\pi_n\op{hofib}(K(A,I^s)\to K(B,I^s))$.}

\begin{remark}
Note that if $f$ is a finite morphism then applying $\Spec$ to the above Milnor square yields an abstract blow-up square, and the associated pro cdh descent and pro excision problems are equivalent. Thus the following results on pro excision may be seen as the earliest manifestations of pro cdh descent.
\end{remark}

The following theorem and corollary summarise the main classical results concerning (pro) excision:

\begin{theorem}\label{theorem_excision}
Let $f:A\to B$, $I$ be an excision situation. Then:
\begin{enumerate}
\item $K_n(A,I)\to K_n(B,I)$ is an isomorphism for all $n\le0$ (Bass, 1968).
\item $K_1(A,I)\to K_1(B,I)$ is surjective and its kernel is a quotient of the abelian group $B/f(A)\otimes_{\bb Z}I/I^2$ (Milnor, Swan, 1971).
\item The birelative $K$-group $K_1(A,B,I)$ is isomorphic to $\Omega_{B/A}^1\otimes_{B}I/I^2$ (Geller--Weibel 1983).
\end{enumerate}
\end{theorem}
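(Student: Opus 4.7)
The plan is to treat the three parts in order, using increasingly refined descriptions of low-degree $K$-theory.

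For part (i), I would first handle $K_0$ via Milnor's patching theorem applied to the Milnor square $A\to B$, $A/I\to B/I$: it produces a six-term Mayer--Vietoris sequence ending in $K_0(B/I)$, and a diagram chase against the long exact sequences of relative $K$-groups identifies $K_0(A,I)\isoto K_0(B,I)$. To extend to negative degrees I would invoke Bass's description of $K_{-n}$ as the cokernel of $K_{-n+1}(R[t])\oplus K_{-n+1}(R[t^{-1}])\to K_{-n+1}(R[t,t^{-1}])$. Since base change to Laurent polynomials preserves excision situations (with ideal $I[t,t^{-1}]$), induction on $|n|$ reduces each case to the one already established.

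For part (ii), I would use the Whitehead presentation $K_1(R,I)=GL(R,I)/E(R,I)$. Any matrix of $GL(B,I)$ can be reduced by Whitehead's lemma to a product of elementary-type generators $1+ie_{jk}$ with entries in $I$, which lift tautologically to $A$, yielding surjectivity. Two such lifts differ by an element of $E(A)\cap GL(A,I)$, and a direct computation shows that this obstruction is generated by commutators $[1+ie_{12},1+be_{21}]$ with $i\in I$, $b\in B$, whose class modulo $E(A,I)$ depends only on $b\bmod f(A)$ and $i\bmod I^2$. This produces the desired surjection $B/f(A)\otimes_{\bb Z}I/I^2\onto \ker(K_1(A,I)\to K_1(B,I))$.

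For part (iii), I would sharpen the surjection of (ii) to an isomorphism by constructing an inverse map $\Omega^1_{B/A}\otimes_B I/I^2\to K_1(A,B,I)$ sending $b\,da\otimes i$ to an explicit matrix commutator built out of $a$, $b$, $i$ (the classical Geller--Weibel recipe, of the shape $[1+aie_{12},1+be_{21}]$ up to elementary corrections). The key verification is that the defining relations of $\Omega^1_{B/A}$ -- namely $da=0$ for $a\in f(A)$, $\bb Z$-linearity, and the Leibniz rule $d(a_1a_2)=a_1\,da_2+a_2\,da_1$ -- are all respected at the level of matrix commutators modulo $E(A,I^2)$. The main obstacle is the Leibniz rule: establishing it requires a patient series of Steinberg-group identities in $A$ demonstrating that the ambiguity of lifting commutators from $B$ to $A$ is controlled exactly by relative Kähler differentials. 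Once this map is shown to be well-defined, composition with the surjection from (ii) gives identities in both directions, yielding the isomorphism.
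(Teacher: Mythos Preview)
Your approaches to (i) and (ii) are essentially the same as the paper's (which in any case only sketches the arguments with references): Bass's inductive definition of negative $K$-theory for (i), and explicit matrix/symbol manipulations for (ii). One small quibble on (ii): surjectivity is even easier than you suggest, since once $I$ is identified as a common ideal one has $GL(A,I)=GL(B,I)$ as groups; the entire content lies in comparing $E(A,I)\subseteq E(B,I)$, which is where your commutators $[1+ie_{12},1+be_{21}]$ enter.

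Part (iii), however, has a genuine gap. What you have outlined --- defining a map $\Omega^1_{B/A}\otimes_B I/I^2\to K_1(A,B,I)$ by an explicit commutator formula and then checking the Leibniz rule and $A$-triviality --- is exactly Vorst's refinement of Swan's map: it shows that the surjection of (ii) factors through $\Omega^1_{B/A}\otimes_B I/I^2$. That yields surjectivity of your map, but not injectivity. Your closing sentence, ``composition with the surjection from (ii) gives identities in both directions,'' does not parse: both the surjection of (ii) and the map you construct point in the \emph{same} direction (into $K_1(A,B,I)$), so there is nothing to compose to produce an inverse. The Geller--Weibel contribution, as the paper's sketch indicates, is a further Mennicke-symbol computation identifying the Swan--Vorst map with the canonical map $K_1(A,B,I)\to K_1(A,I)$; this is what pins down injectivity, and you have not proposed any mechanism for it. (Minor notational slip: your generator ``$b\,da\otimes i$'' is ill-chosen, since $da=0$ in $\Omega^1_{B/A}$ for $a\in A$; you presumably intend $db\otimes i$ with $b\in B$.)
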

\begin{proof}[Notes on the proofs]
The case $n=0$ of (i) may be found as an exercise in Weibel's $K$-book \cite[Exs.~II.2.3 \& II.2.4]{Weibel2013}; the case $n<0$ then follows by the usual descending induction, noting that $A[T]\to B[T]$, $I A[T]$ is also an excision situation.

If $f$ is surjective, then it was proved by Milnor \cite[Lem.~6.3]{Milnor1971} that $K_1(A,I)\isoto K_1(B,I)$, and by Swan \cite[Thm.~7.1]{Swan1971} that $K_2(A,I)\onto K_2(B,I)$. Therefore $K_1(A,B,I)\isoto K_1(f(A),B,I)$, which reduces (ii) and (iii) to the case when $f$ is injective; we therefore henceforth assume that $I\subseteq A\subseteq B$.

(ii): Swan \cite[\S4]{Swan1971} used explicit calculations with Steinberg symbols to show that the sequence \[B/A\otimes_{\bb Z}I/I^2\xTo{\psi} K_1(A,I)\To K_1(B,I)\To 0\] is exact. Here the map $\psi$ is defined by sending an element $b\otimes x$ to the class (known as a Mennicke symbol) in $K_1(A,I)$ associated to the matrix %\begin{bmatrix}x\\1-bx\end{bmatrix}:=
\[\begin{pmatrix}1-bx & x \\ -b^2x & 1+bx\end{pmatrix}\in \op{SL}_2(A,I).\]

(iii): Vorst \cite[Thm.~2.5]{Vorst1979}, by observing some additional relations among Mennicke symbols, improved Swan's result by showing that $\psi$ actually factors through the surjection \[B/A\otimes_{\bb Z}I/I^2\To \Omega_{B/A}^1\otimes_BI/I^2,\qquad b\otimes x\mapsto db\otimes x,\] thereby defining the ``Swan--Vorst'' map $\ep:\Omega_{B/A}^1\otimes_BI/I^2\to K_1(A,I)$. Finally, it was shown by Geller--Weibel \cite[Thm.~1.1]{GellerWeibel1983} (more calculations with Mennicke symbols) that the Swan--Vorst map could be identified with the canonical map $K_1(A,B,I)\to K_1(A,I)$.
\end{proof}

\begin{corollary}\label{corollary_pro_excision<2}
$K$-theory satisfies pro excision in degrees $\le 1$; i.e., if $f:A\to B$, $I$ is an excision situation, then the pro abelian group $\{K_n(A,B,I^s)\}$ vanishes for $n\le1$.
\end{corollary}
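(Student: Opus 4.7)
The plan is to handle the two ranges $n\le 0$ and $n=1$ separately, in each case deducing pro-vanishing directly from the corresponding part of Theorem~\ref{theorem_excision} applied to the excision situations $A\to B$, $I^s$ (which are indeed excision situations, as noted at the start of \S1.1).

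For $n\le 0$ the result is essentially automatic. Applying Theorem~\ref{theorem_excision}(i) to each $(A\to B,I^s)$ yields an isomorphism $K_n(A,I^s)\isoto K_n(B,I^s)$ for every $s\ge 1$ and every $n\le 0$. Hence the birelative group $K_n(A,B,I^s)=\pi_n\op{hofib}(K(A,I^s)\to K(B,I^s))$ is actually zero on each level, which is much stronger than the required pro-vanishing.

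For $n=1$, Theorem~\ref{theorem_excision}(iii) applied to $(A\to B,I^s)$ identifies $K_1(A,B,I^s)\cong\Omega_{B/A}^1\otimes_B I^s/I^{2s}$; note that $\Omega_{B/A}^1$ does not depend on $s$. It then suffices to show that the pro abelian group $\{\Omega_{B/A}^1\otimes_B I^s/I^{2s}\}_s$ vanishes. Here I would use a refinement argument: whenever $t\ge 2s$, the inclusion $I^t\sseq I^{2s}$ forces the transition map $I^t/I^{2t}\to I^s/I^{2s}$ to be zero, and tensoring with $\Omega_{B/A}^1$ preserves the zero map. Thus for every $s$ one can find $t=2s$ for which the transition map $K_1(A,B,I^t)\to K_1(A,B,I^s)$ is zero, which is exactly the pro-vanishing condition.

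The only delicate point is naturality: one needs the Geller--Weibel isomorphism in Theorem~\ref{theorem_excision}(iii) to be compatible, as $s$ varies, with the structure maps in the pro system $\{K_1(A,B,I^s)\}_s$ coming from $I^{s+1}\sseq I^s$. This is built into the construction of the Swan--Vorst map via Mennicke symbols $b\otimes x\mapsto db\otimes x$ and is straightforward to verify, but it is the one piece of genuine bookkeeping needed; once it is in hand, the corollary reduces to the elementary observation about the pro system $\{I^s/I^{2s}\}_s$ above.
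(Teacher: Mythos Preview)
Your treatment of $n=1$ matches the paper's argument exactly, including the observation that $\{I^s/I^{2s}\}_s$ is pro-zero because the transition maps $I^{2s}/I^{4s}\to I^s/I^{2s}$ vanish.

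There is, however, a small but genuine gap in your $n\le 0$ argument. Knowing that $K_n(A,I^s)\to K_n(B,I^s)$ is an isomorphism for all $n\le 0$ does \emph{not} by itself force $K_n(A,B,I^s)=0$ for all $n\le 0$. From the long exact sequence of the fibre
\[
\cdots\To K_{n+1}(A,I^s)\To K_{n+1}(B,I^s)\To K_n(A,B,I^s)\To K_n(A,I^s)\To K_n(B,I^s)\To\cdots
\]
the vanishing of $K_n(A,B,I^s)$ requires the map one degree higher to be surjective. For $n<0$ this is covered by (i), since $n+1\le 0$; but for $n=0$ you need $K_1(A,I^s)\to K_1(B,I^s)$ to be surjective, and that is the surjectivity assertion in Theorem~\ref{theorem_excision}(ii), not a consequence of (i). The paper's proof explicitly invokes (i) together with ``the surjectivity part of (ii)'' for exactly this reason. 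Once you add that one sentence, your argument is complete and coincides with the paper's.
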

\begin{proof}
The birelative group $K_n(A,B,I^s)$ vanishes for each fixed $s\ge1$ and $n\le0$ by (i) and the surjectivity part of (ii) of the previous theorem (recall that $A\to B$, $I^s$ is an excision situation for each $s\ge1$). Concerning the case $n=1$, the desired vanishing follows from (iii) and the observation that the pro abelian group $\{\Omega_{B/A}^1\otimes_BI^s/I^{2s}\}_s$ is zero (since each map $I^{2s}/I^{4s}\to I^s/I^{2s}$ is obviously zero).
\end{proof}

\begin{remark}[Failure of excision]
It is not hard to use Theorem \ref{theorem_excision}(iii) to find an example of an excision situation, with $f$ injective, for which $K_1(A,B,I)\neq 0$. For example, already in 1971, Swan \cite[\S3]{Swan1971} showed that in the situation $B:=\bb Z[\zeta_p]\supset A:=B+pB\supset I:=pB$, the surjection $K_1(A,I)\to K_1(B,I)$ is not an isomorphism. Therefore the passage to the pro world by taking all powers of $I$ seems to be essential to understanding excision and cdh descent phenomena in algebraic $K$-theory.
\end{remark}

\begin{remark}[Validity of excision]\label{remark_SW}
In contrast to the previous remark, the following theorem was proved by Suslin \cite{Suslin1995} in 1995, building on early work joint with Wodzicki \cite{Suslin1992}: if $f:A\to B$, $I$ is an excision situation, $N>0$ is fixed, and $\Tor_*^{\bb Z\ltimes I}(\bb Z,\bb Z)=0$ for all $0<n\le N$, then $K_*(A,B,I)=0$ for all $n\le N$. Unfortunately, the Tor vanishing hypothesis is rarely satisfied for the types of excision situations which appear in algebraic geometry; this result is more appropriate for Banach algebras, infinite-dimensional function algebras, etc.
\end{remark}

\subsection{Pro cdh descent}
The goal of this section is to prove Theorem \ref{theorem_Weibel}, which is historically the first interesting case of pro cdh descent which does not follow trivially from the above classical results on pro excision. We must first prove the following lemma, which follows in a relatively straightforward way from Corollary \ref{corollary_pro_excision<2}:

\begin{lemma}\label{lemma_pro_cdh_in_finite_case1}
Let $\Sigma$ be an abstract blow-up square of Noetherian schemes in which $f$ is a finite morphism, and assume that $X$ admits a cover by $1+d$ affine open subschemes. Then $K$-theory satisfies pro cdh descent with respect to $\Sigma$ in degree $\le 1-d$, i.e., the pro abelian group $\{K_n(X,X',Y_s)\}_s$ vanishes for $n\le1-d$.
\end{lemma}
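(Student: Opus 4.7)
The plan is to induct on $d$: the base case $d=0$ is an almost immediate consequence of the pro excision result Corollary \ref{corollary_pro_excision<2}, and the inductive step comes from Mayer--Vietoris for non-connective $K$-theory.

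For the base case, write $X = \Spec A$; since $f$ is finite, $X' = \Spec B$ with $B$ a finitely generated $A$-module, and if $J \subseteq A$ cuts out $Y$ then $Y_s = \Spec A/J^s$ and $Y'_s = \Spec B/J^s B$. I first claim that for $s$ large enough, $(A \to B,\, J^s)$ is an excision situation. Since the map $\Spec B \setminus V(JB) \to \Spec A \setminus V(J)$ is an isomorphism, the kernel $K := \ker(A \to B)$ is $J$-power torsion, and $A$ being Noetherian lets me invoke Artin--Rees to conclude that $J^s \cap K = 0$ for all $s \ge s_0$ for some $s_0$. Applying Corollary \ref{corollary_pro_excision<2} to the excision situation $(A \to B,\, J^{s_0})$ then yields $\{K_n(A, B, J^{s_0 t})\}_t = 0$ for all $n \le 1$; since the subsystem $\{J^{s_0 t}\}_t$ is cofinal in $\{J^s\}_s$, the pro vanishing of $\{K_n(X, X', Y_s)\}_s$ in the range $n \le 1 = 1-d$ follows.

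For the inductive step, assume the claim for $d-1$, and let $X = U_0 \cup \cdots \cup U_d$ be an affine open cover. Set $U := U_0$ and $V := U_1 \cup \cdots \cup U_d$, so $V$ admits an affine cover of size $d$, and (after possibly refining the intersections $U \cap U_i$ by principal opens of the affine $U$) so does $U \cap V$. Pulling $\Sigma$ back along each of $U, V, U \cap V \hookrightarrow X$ produces abstract blow-up squares of Noetherian schemes with still-finite vertical map, and Zariski descent for non-connective $K$-theory (Thomason--Trobaugh), applied simultaneously to $K(X, Y_s)$ and $K(X', Y'_s)$, yields a Mayer--Vietoris long exact sequence of birelative $K$-groups
\[ \cdots \to K_{n+1}(U \cap V, \ldots) \to K_n(X, X', Y_s) \to K_n(U, \ldots) \oplus K_n(V, \ldots) \to K_n(U \cap V, \ldots) \to \cdots . \]
For $n \le 1-d$ one has $n \le 1$ and $n+1 \le 2-d$, so by the inductive hypothesis all three flanking pro groups vanish, and hence so does the middle one. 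The principal technical nuisance is the bookkeeping alluded to above, namely ensuring that $U \cap V$ really admits an affine cover of size at most $d$ so the induction closes cleanly (this is automatic if $X$ is separated, but must be argued by refinement in the general qcqs Noetherian case); everything else --- Artin--Rees, cofinality of pro-systems of ideals, Zariski descent for non-connective $K$-theory, and Corollary \ref{corollary_pro_excision<2} --- enters the proof as a black box.
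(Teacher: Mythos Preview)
Your overall strategy---induct on $d$ via Mayer--Vietoris to reduce to the affine case, then invoke Corollary~\ref{corollary_pro_excision<2}---is exactly the paper's approach.

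There is, however, a genuine gap in your base case. To say that $(A\to B,\,J^{s_0})$ is an excision situation you must verify \emph{two} conditions: that $f$ is injective on $J^{s_0}$, and that $f(J^{s_0})$ is an ideal of $B$. You check only the first, via Artin--Rees applied to $\ker f$. The second is what guarantees that the excision birelative group $K_n(A,B,J^{s_0 t})$ (built from $B/f(J^{s_0 t})$) agrees with the cdh birelative group $K_n(X,X',Y_{s_0 t})$ (built from $B/J^{s_0 t}B$); without it you cannot identify the two, and Corollary~\ref{corollary_pro_excision<2} does not obviously apply. The missing ingredient is that $\coker f$, not just $\ker f$, is $J$-power torsion: since $B/f(A)$ is a finitely generated $A$-module supported on $V(J)$, some $J^{s_1}$ kills it, so $J^{s_1}B\subseteq f(A)$. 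One then replaces $J$ by an ideal with the same radical which is simultaneously an ideal of $A$ and of $B$ (for instance the conductor of $f(A)\subseteq B$, pulled back to $A$); the resulting pro-systems are intertwined, and now Corollary~\ref{corollary_pro_excision<2} applies cleanly. This is what the paper means by ``possibly after replacing $Y$ by an infinitesimal thickening, the abstract blow-up square $\Sigma$ is $\Spec$ of a Milnor square.'' Note that simply taking $J^{s_0}$ for large $s_0$ does not always work: for $A=k[t^2,t^3]\subseteq B=k[t]$ and $J=t^4A$, one checks that $f(J^s)$ is never an ideal of $B$.

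Your self-flagged worry about whether $U\cap V$ admits a $d$-element affine cover is legitimate; the paper is equally terse on this point. For separated $X$ the intersection of affines is affine and the induction closes immediately; in general one can run the induction through the \v Cech spectral sequence for the cover $\{U_0,\dots,U_d\}$, whose $E_1$-page is concentrated in columns $0\le p\le d$, rather than through iterated two-term Mayer--Vietoris.
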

\begin{proof}
By induction on $d$, using the Mayer--Vietoris property for the presheaf of spectra  $X\supseteq U\mapsto K(U,X'\times_XU,Y_s\times_XU)$, we may assume that $X$ is affine. Then $X'$ is also affine and $f$ corresponds to a finite morphism of rings $f:A\to B$. Since $\Sigma$ is an abstract blow-up square, the induced map on localisations $A_\frak p\to B\otimes_AA_\frak p$ is an isomorphism for all prime ideals $\frak p\subseteq A$ which do not contain $I:=I(Y)$; hence the finitely generated $A$-modules $\ker f$ and $\coker f$ are killed by $I^s$ for $s\gg0$. It then follows from the Artin--Rees lemma (resp.~a trivial argument) that $I^s\cap\ker f=0$ (resp.~$f(I^s)$ is an ideal of $B$) for $s\gg0$. In conclusion, possibly after replacing $Y$ by an infinitesimal thickening, the abstract blow-up square $\Sigma$ is $\Spec$ of a Milnor square and hence the desired pro cdh descent reduces to Corollary \ref{corollary_pro_excision<2}.
\end{proof}

\begin{remark}
If $\Sigma$ is an abstract blow-up square of Noetherian schemes in which $f$ is a finite morphism, then $K$-theory satisfies pro cdh descent with respect to $\Sigma$ in degrees $1-\dim Y$ (which may be better or worse than the bound offered by the previous lemma). To prove this, argue as in the previous lemma but replace the initial induction by the observation that each descent spectral sequence \[E_2^{pq}(s)=H^p(X,\cal K_{-q,(X,X',Y_s))})\Longrightarrow K_{-p-q}(X,X',Y_s)\] is supported in the range $p\le\dim Y$, since the sheaf $\cal K_{-q,(X,X',Y_s)}$ is supported on $Y$.
%The Zariski presheaf of spectra $X\supseteq U\mapsto K(U,X'\times_XU,Y_s\times_XU)$ satisfies Zariski descent (since $K$-theory does), thereby yielding a descent spectral sequence \[E_2^{pq}(s)=H^p(X,\cal K_{-q,(X,X',Y_s))})\Longrightarrow K_{-p-q}(X,X',Y_s)\] for each $s$; note that $E_s^{pq}(s)=0$ for $p\le\dim Y$, since the sheaf $\cal K_{-q,(X,X',Y_s)}$ is supported on $Y$. These may be assembled into a spectral sequence of pro abelian groups \[E_2^{pq}=\{H^p(X,\cal K_{-q,(X,X',Y_s))})\}_s\Longrightarrow \{K_{-p-q}(X,X',Y_s)\}_s,\] still supported in the range $p\le\dim Y$. But Corollary \ref{corollary_pro_excision<2} implies that $E_2^{pq}=0$ for $q\ge -1$, and so it follows that the abutment of the spectral sequence, i.e., $\{K_n(X,X',Y_s)\}_s$, vanishes in the range $n\le 1-\dim Y$.
\end{remark}

The following result is implicitly due to Weibel \cite[\S3--4]{Weibel2001}, though he did not explicitly state it, and was used by Krishna--Srinivas \cite{Krishna2002} to study zero-cycles on singular surfaces:

\begin{theorem}\label{theorem_Weibel}
Let $\Sigma$ be an abstract blow-up square in which $X$ is an excellent, normal, $2$-dimensional scheme, $X'$ is regular, and $Y$ is $0$-dimensional. Then $K$-theory satisfies pro cdh descent with respect to $\Sigma$ in degrees $\le 0$, i.e., the pro abelian group $\{K_n(X,X',Y_s)\}_s$ vanishes for $n\le0$.
\end{theorem}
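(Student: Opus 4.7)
The strategy is to combine the descent spectral sequence for non-connective $K$-theory with the finite-morphism pro-cdh result of Lemma \ref{lemma_pro_cdh_in_finite_case1}, exploiting the $0$-dimensionality of $Y$ to kill cohomological obstructions.

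\emph{Step 1 (spectral sequence collapse).} From the remark following Lemma \ref{lemma_pro_cdh_in_finite_case1} we have a descent spectral sequence
\[E_2^{pq}(s) = H^p(X, \mathcal{K}_{-q, (X, X', Y_s)}) \Longrightarrow K_{-p-q}(X, X', Y_s).\]
Since $f$ restricts to an isomorphism over $X \setminus Y$, the Zariski sheaf $\mathcal{K}_{-q, (X, X', Y_s)}$ vanishes on $X \setminus Y$, so it is supported on the $0$-dimensional closed subscheme $Y$. Hence $H^p(X, \mathcal{K}_{-q, (X, X', Y_s)}) = 0$ for $p \ge 1$, and the spectral sequence collapses to
\[K_n(X, X', Y_s) \cong \bigoplus_{y \in Y} \bigl(\mathcal{K}_{n, (X, X', Y_s)}\bigr)_y.\]
It therefore suffices to prove the pro-vanishing of each stalk at $y \in Y$ in degrees $n \le 0$.

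\emph{Step 2 (local/complete reduction).} Fix $y \in Y$, put $A = \mathcal{O}_{X, y}$ (a local, $2$-dimensional, excellent, normal ring with maximal ideal $\mathfrak{m}$) and $X'_A = X' \times_X \Spec A$. The task becomes showing that $\{K_n(A, X'_A, \mathfrak{m}^s)\}_s = 0$ for $n \le 0$. Pro-excision for the inclusion $A \hookrightarrow \hat A$ (Corollary \ref{corollary_pro_excision<2}), combined with its base change along $X' \to X$, lets us replace $A$ by its $\mathfrak{m}$-adic completion $\hat A$. The crucial feature of the complete local situation is that for each $s$ the fiber product $X'_{\hat A} \times_{\Spec \hat A} \Spec(\hat A / \hat{\mathfrak{m}}^s)$ is proper over an Artinian ring, hence \emph{finite} over it, and therefore of the form $\Spec B_s$ for some finite $\hat A / \hat{\mathfrak{m}}^s$-algebra $B_s$. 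The pro-systems $\{Y'_s\}_s$ and $\{\Spec B_s\}_s$ of closed subschemes of $X'_{\hat A}$ share the reduced subscheme $Y'$ and are cofinal, so $\{K(Y'_s)\}_s \simeq \{K(B_s)\}_s$.

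\emph{Step 3 (finite case and main obstacle).} For each $s$, the pair $(\hat A, B_s)$ with ideal $\hat{\mathfrak{m}}^s$ is a Milnor square with a finite horizontal map, and Corollary \ref{corollary_pro_excision<2} directly provides pro-vanishing of $\{K_n(\hat A, B_s, \hat{\mathfrak{m}}^s)\}_s$ in degrees $n \le 1$. The substantive final step---and the main obstacle I anticipate---is to assemble these individual, level-by-level pro-excision statements into the desired pro-vanishing of $\{K_n(\hat A, X'_{\hat A}, \hat{\mathfrak{m}}^s)\}_s$ for the \emph{birational} morphism $X'_{\hat A} \to \Spec \hat A$. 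This requires comparing $K(X'_{\hat A})$ with the inverse system $\{K(B_s)\}_s$, i.e.\ a $K$-theoretic form of Grothendieck's formal existence theorem for the proper morphism $X'_{\hat A} \to \Spec \hat A$. The regularity of $X'$ presumably enters here, controlling the negative $K$-theory of $X'_{\hat A}$ sufficiently that the formal/algebraic comparison is tractable; this is the one substantive ingredient beyond the elementary pro-excision machinery developed in the previous section.
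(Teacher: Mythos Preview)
Your Step~1 matches the paper's proof exactly. However, Step~2 contains a fatal error: the assertion that $X'_{\hat A}\times_{\Spec\hat A}\Spec(\hat A/\hat{\frak m}^s)$ is ``proper over an Artinian ring, hence \emph{finite} over it'' is false. Proper over an Artinian base does not imply finite (equivalently, affine); for instance $\bb P^1_k$ is proper over a field but not finite. In the situation at hand the exceptional fibre $Y'$ of a resolution of a normal surface singularity is a curve, i.e.\ $1$-dimensional, so its infinitesimal thickenings $Y'_s$ are genuinely non-affine projective schemes and there are no rings $B_s$ as you describe. This collapses Step~3 entirely: there are no Milnor squares $(\hat A,B_s)$ to feed into Corollary~\ref{corollary_pro_excision<2}, and the ``formal existence'' obstacle you identify is not the real difficulty.

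The paper's argument, after the same Step~1 localisation to $A=\roi_{X,x}$, takes a completely different route and does \emph{not} pass to the completion. The key input (from Weibel) is that, since $A$ is excellent normal $2$-dimensional and $X'$ is regular, $X'$ is necessarily the blow-up $\op{Bl}_JA$ along some $\frak m_A$-primary ideal $J$; one then finds a \emph{reduction ideal} $J'\subseteq J$ generated by a regular sequence of length two. The resulting map $X'=\op{Bl}_JA\to\op{Bl}_{J'}A$ is finite, so Lemma~\ref{lemma_pro_cdh_in_finite_case1} applies to it (with $d=1$, since $\op{Bl}_{J'}A$ is covered by two affines), while Thomason's theorem on blow-ups along regular immersions kills $K(X,\op{Bl}_{J'}X,V(J'^r))$ outright. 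A comparison of fibre sequences then yields the pro-vanishing. Thus the regularity of $X'$ enters not to control negative $K$-theory directly, but to guarantee the blow-up/reduction-ideal structure that makes Thomason's theorem available.
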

\begin{proof}
The Zariski presheaf of spectra $X\supseteq U\mapsto K(U,X'\times_XU,Y_s\times_XU)$ satisfies Zariski descent (since $K$-theory does), thereby yielding a descent spectral sequence \[E_2^{pq}(s)=H^p(X,\cal K_{-q,(X,X',Y_s))})\Longrightarrow K_{p+q}(X,X',Y_s)\] for each $s$. But the sheaf $\cal K_{-q,(X,X',Y_s)}$ is supported on the $0$-dimensional subspace $|Y|$ (since $X'\setminus Y'\isoto X\setminus Y$), and so this spectral sequence degenerates to edge map isomorphisms \[\bigoplus_{x\in |Y|}K_n(\Spec\roi_{X,x},X'\times_X\Spec\roi_{X,x},Y_s\times_X\Spec\roi_{X,x})\Isoto K_n(X,X',Y),\] compatibly in $s$, for each $n\in\bb Z$. This reduces us to the case that $X=\Spec\roi_{X,x}$ for each $x\in Y$.

Therefore for the rest of the proof we may suppose that $X$ is the spectrum of an excellent, normal, $2$-dimensional local ring $A$; we may also suppose that $Y=V(\frak m_A)$, since this does not change the pro abelian groups which we must show vanish. Fix $s\ge1$.

As argued at the beginning of the proof of \cite[Thm.~1.9]{Weibel2001}, $X'$ is necessarily the blow-up of $X$ along an $\frak m_A$-primary ideal $J$; replacing $J$ by $J^s$ (which does not change the blow-up) we may assume $J\subseteq\frak m_A^s$. By \cite[Prop.~1.6 \& Eg.~1.7]{Weibel2001}, there exists a reduction ideal $J'\subseteq J$ for $J$ which is generated by a regular sequence. It follows from \cite[Thm.~1.5]{Weibel2001} that the resulting canonical map of blow-ups $X'=\op{Bl}_{J}X\to\op{Bl}_{J'}X$ is finite. In conclusion, we have the following tower of abstract blow-up squares:
\[\xymatrix{
Z\times_{\op{Bl}_{J'}X}X'\ar[r]\ar[d] & X'\ar[d]\\
Z\ar[d]\ar[r] & \op{Bl}_{J'}X\ar[d]\\
V(J')\ar[r] & X
}\]
where $Z:=V(J')\times_X\op{Bl}_{J'}X$.

There is an associated fibre sequence of birelative $K$-theories \[K(X, \op{Bl}_{J'}X, V(J'))\To K(X, X', V(J'))\To K(\op{Bl}_{J'}X, X', Z),\] and similarly after replacing $J'$ by any power $J'^r$: \[K(X, \op{Bl}_{J'}X, V(J'^r))\To K(X, X', V(J'^r))\To K(\op{Bl}_{J'}X, X', Z_r).\] We now make two observations:
\begin{enumerate}
\item There exists $r\gg0$ such that the canonical map \[K_n(\op{Bl}_{J'}X, X', Z_r)\To K_n(\op{Bl}_{J'}X, X', Z)\] is zero for $n\le 0$; this follows from the previous lemma since $J'$ is generated by a regular sequence of length two and so $\op{Bl}_{J'}X$ admits a cover by two open affines (alternatively, appeal to the previous remark noting that $\dim Z=1$).
\item Thomason's calculation of the $K$-theory of a blow-up along a regular immersion means that $K(X, \op{Bl}_{J'}X, V(J'))$ is trivial \cite{Thomason1993} (see also \cite[\S1]{Cortinas2008} for some further explanation on this point).
\end{enumerate}
Comparing the previous two fibre sequences with these observations in mind, we see that the canonical map $K_n(X, X', V(J'^r))\to K_n(X, X', V(J'))$ is zero for $n\le0$. But there is $s'\gg r$ such that $\frak m_A^{s'}\subseteq J'^r$, and so $K_n(X, X', V(\frak m_A^{s'}))\to K_n(X, X', V(\frak m_A^s))$ is also zero for $n\le0$, as required to complete the proof.
\end{proof}

\section{The state of the art}
\subsection{Pro excision for Noetherian rings, and consequences}
The following theorem summarises the most important recent results concerning pro excision, in which we see in particular that pro excision is always true when dealing with Noetherian rings:

\begin{theorem}\label{pro_excision_art}
Let $f:A\to B$, $I$ be an excision situation. Then the following implications hold:

\[\xymatrix{
\txt{$K$-theory satisfies pro excision in the given situation,\\  i.e., $\{K_n(A,I^s)\}_s\to \{K_n(B,I^s)\}_s$ is an isomorphism for all $n\in\bb Z$.}\\
\ar@{=>}[u]_{\txt{\rm(Geisser--Hesselholt)}}\txt{The pro abelian group $\{\Tor_n^{\bb Z\ltimes I^s}(\bb Z,\bb Z)\}_s$ is zero for all $n\ge1$.}\\
\ar@{=>}[u]_{\txt{\rm(M.)}}\txt{The pro abelian group $\{\Tor_n^{A}(A/I^s,A/I^s)\}_s$ is zero for all $n\ge1$.}\\
\ar@{=>}[u]_{\txt{\rm(Andr\'e, Quillen)}}\txt{$A$ is Noetherian, {\rm or} $I$ is a quasiregular\footnotemark ideal of $A$}
}\]
\footnotetext{This means that $I/I^2$ is flat as an $A/I$-module and the canonical map $\bigwedge_{A/I}^nI/I^2\to\Tor^{A/I}_n(A/I,A/I)$ is an isomorphism for each $n\ge0$; if $A$ is Noetherian then it is equivalent to ask that $I$ be locally generated by a regular sequence \cite[Def.~6.10]{Quillen1970}.}
\end{theorem}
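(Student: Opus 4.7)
The plan is to prove the three implications separately, proceeding up the diagram from bottom to top; the full statement then follows by composition. For the bottom implication (Andr\'e--Quillen), two different arguments are required. If $A$ is Noetherian, a pro-version of the Artin--Rees lemma applies: given any partial free resolution of $A/I^s$ in a fixed range of degrees, Artin--Rees permits one to choose $t \gg s$ so that the higher differentials become zero after tensoring with $A/I^t$, which is exactly the required pro-triviality. If instead $I$ is quasiregular, the Koszul-type identification $\Tor^A_\bullet(A/I,A/I)\cong\Lambda^\bullet_{A/I}(I/I^2)$ combines with a filtration argument on $\{A/I^s\}_s$ to control $\Tor^A_*(A/I^s,A/I^s)$; pro-triviality then follows because each transition map in the relevant associated graded factors through a square of the ideal and is therefore eventually zero.

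The middle implication is the technical heart of the theorem. I would work via the square
\[
\xymatrix{
\bb Z\ltimes I^s \ar[r]\ar[d] & A\ar[d]\\
\bb Z\ar[r] & A/I^s
}
\]
which is both cartesian and cocartesian in rings. The strategy is to build a spectral sequence comparing $\Tor^{\bb Z\ltimes I^s}_*(\bb Z,\bb Z)$ and $\Tor^A_*(A/I^s,A/I^s)$: resolving $\bb Z$ as a $\bb Z\ltimes I^s$-module via the short exact sequence $0\to I^s\to\bb Z\ltimes I^s\to\bb Z\to 0$ and iterating leads to a bar-style complex whose homology admits a change-of-rings spectral sequence with $E_2$-page built from $\Tor^A_*(A/I^s,A/I^s)$, since the $\bb Z\ltimes I^s$-module structure on $I^s$ is controlled by its $A$-module structure. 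Pro-vanishing of the input then forces pro-vanishing of the output, provided the two pro-indices are properly interleaved---I would almost certainly pass to a cofinal subsystem (such as $\{I^{Ns}\}_s$ for $N$ growing with the homological degree) to absorb the $s$-shifts introduced by the spectral sequence abutment. I expect this step to be the main obstacle, since the compatibility in $s$ must be maintained uniformly across all degrees $n\ge 1$ simultaneously.

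Finally, for the top implication (Geisser--Hesselholt), I would invoke a pro-version of the Suslin--Wodzicki excision criterion recalled in Remark \ref{remark_SW}. The assumed Tor-vanishing over $\bb Z\ltimes I^s$ is an intrinsic condition on the non-unital ring $I^s$ (independent of the ambient ring), and it is precisely the input needed to drive the Suslin--Wodzicki machinery. The substantive additional contribution of Geisser and Hesselholt is to show that this machinery can be carried out pro-coherently in $s$, yielding a pro-isomorphism on all $K$-groups of the given excision situation $A\to B$, $I$ rather than merely individual excision isomorphisms at each fixed level in a bounded range of degrees.
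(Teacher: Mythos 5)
Your proposal decomposes the theorem into the same three implications with the same attributions and essentially the same arguments as the paper, which itself only gives ``notes on the proof'' with citations: Artin--Rees for the Noetherian case and Quillen's observation that $\Tor_n^A(A/I^{2s},A/I^{2s})\to\Tor_n^A(A/I^s,A/I^s)$ vanishes in the quasiregular case for the bottom implication, a Tor-comparison between $A$ and $\bb Z\ltimes I^s$ for the middle, and a pro version of the Suslin--Wodzicki criterion for the top. Your sketches are consistent with the cited sources, so this matches the paper's approach.
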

\begin{proof}[Notes on the proof]
The first implication, due to Geisser--Hesselholt \cite[Thm.~3.1]{GeisserHesselholt2011} \cite[Thm.~1.1]{GeisserHesselholt2006}, is a pro version of the criterion for excision which was mentioned in Remark~\ref{remark_SW}.

The contribution of the author \cite[Thm.~1.2]{Morrow_pro_H_unitality} follows from manipulating various Tor vanishing conditions.

If $A$ is Noetherian, then the final implication is an easy consequence of the Artin--Rees lemma which seems to have been first noticed by Andr\'e and Quillen; see, e.g., \cite[Lem.~2.1]{Morrow_pro_H_unitality} for a proof. On the other hand, if $I$ is a quasiregular of $A$, then Quillen proved that the canonical map $\Tor_n^{A}(A/I^{2s},A/I^{2s})\}_s\to \Tor_n^{A}(A/I^{s},A/I^s)\}_s$ is zero (for example, if $I$ is generated by a single non-zero-divisor $t$, then the map can be represented by $A/I^{2s}\xto{\times t^s} A/I^s$, which is clearly zero); see \cite[Prop.~8.5]{Quillen1968} and \cite[Eg.~1.4]{Morrow_pro_H_unitality}.
\end{proof}

\begin{remark}
A more direct proof that $K$-theory satisfies pro excision whenever $A$ and $B$ are Noetherian is also given in the recent work of Kerz--Strunk--Tamme \cite[Corol.~4.12]{KerzStrunkTamme2016}.
\end{remark}

From the previous theorem we already obtain an interesting case of pro cdh descent:

\begin{corollary}
Let $\Sigma$ be an abstract blow-up square of Noetherian schemes, and assume that $f:X'\to X$ is a finite morphism. Then $K$-theory satisfies pro cdh descent for $\Sigma$.
\end{corollary}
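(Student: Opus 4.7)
The plan is to reduce the corollary to the pro excision statement of Theorem \ref{pro_excision_art}, upgrading the argument of Lemma \ref{lemma_pro_cdh_in_finite_case1}. That lemma was limited to degrees $\le 1-d$ only because it invoked Corollary \ref{corollary_pro_excision<2} in the affine case, which itself only handled degrees $\le 1$; having Theorem \ref{pro_excision_art} available in all degrees removes both restrictions at once.

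First I would apply Zariski descent to the presheaf of spectra $U\mapsto K(U,\,X'\times_X U,\,Y_s\times_X U)$ on $X$. A Mayer--Vietoris / Noetherian induction on $X$ reduces us to the case that $X=\Spec A$ is affine, and then $X'=\Spec B$ is affine as well because $f$ is finite.

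Second, exactly as in the proof of Lemma \ref{lemma_pro_cdh_in_finite_case1}: since $\Sigma$ is an abstract blow-up square the finite ring map $f:A\to B$ is an isomorphism after localising at any prime of $A$ not containing $I:=I(Y)$, so the finitely generated $A$-modules $\ker f$ and $B/f(A)$ are killed by $I^s$ for $s\gg0$. Artin--Rees then yields $I^s\cap\ker f=0$ for $s\gg0$, and $f(I^s)$ is trivially an ideal of $B$. After replacing $Y$ by the infinitesimal thickening $V(I^s)$ (which merely re-indexes the pro system $\{Y_s\}_s$ and does not change the statement to be proved), I may therefore assume that $A\to B$, $I$ is an excision situation. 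In this setting $\Sigma$ is precisely $\Spec$ of the associated Milnor square, so pro cdh descent for $\Sigma$ amounts to pro excision for $A\to B$, $I$.

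Third, since $A$ is Noetherian, Theorem \ref{pro_excision_art} furnishes the required pro excision, and the proof is complete. The main obstacle---and the only genuinely new ingredient relative to Lemma \ref{lemma_pro_cdh_in_finite_case1}---is the pro excision theorem in the Noetherian case itself, i.e., the full strength of Theorem \ref{pro_excision_art}; this packages the deep pro Suslin--Wodzicki input of Geisser--Hesselholt, the Tor-manipulation of the author, and the Artin--Rees observation attributed to Andr\'e and Quillen. Alternatively one could invoke the direct argument of Kerz--Strunk--Tamme mentioned in the remark just above the statement.
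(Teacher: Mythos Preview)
Your proof is correct and follows exactly the paper's approach: reduce to the affine case by Zariski descent, then rerun the argument of Lemma~\ref{lemma_pro_cdh_in_finite_case1} to obtain a Milnor square, and finally invoke Theorem~\ref{pro_excision_art} (rather than the weaker Corollary~\ref{corollary_pro_excision<2}) to conclude in all degrees. The paper's own proof is just a two-line sketch of precisely this, so you have simply unpacked it.
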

\begin{proof}
By Zariski descent for $K$-theory we may immediately reduce to the case that $X$ is affine, after which the claim follows by repeating the proof of Lemma \ref{lemma_pro_cdh_in_finite_case1}.
\end{proof}

The previous theorem may also be used to establish pro cdh descent for blow-ups along regular immersions:
%\footnote{A proof of this assertion had not previously appeared in the literature, though he stated it without proof in \cite[Rmk.~3.8(i)]{Morrow_pro_cdh_descent}.}

\begin{theorem}\label{theorem_regular_imm}
Let $\Sigma$ be an abstract blow-up square of Noetherian schemes; assume that $Y\into X$ is a regular immersion and that $X'$ is the blow-up of $X$ along $Y$. Then $K$-theory satisfies pro cdh descent with respect to $\Sigma$.
\end{theorem}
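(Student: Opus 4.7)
The plan is to combine Thomason's formula for the $K$-theory of a regular blow-up with the pro excision theorem for Noetherian rings (Theorem~\ref{pro_excision_art}). First, by Zariski descent for the presheaf of spectra $U\mapsto K(U,X'\times_X U,Y_s\times_X U)$ on $X$, I reduce to the case $X=\Spec A$ affine; after further localisation, $I=(t_1,\ldots,t_c)$ is globally generated by a regular sequence, $X'=\op{Bl}_IX=\op{Proj}(\bigoplus_{n\ge0}I^n)$, and the exceptional divisor is $Y'=\bb P(I/I^2)$. Thomason's theorem then yields $K(X,X',Y)=0$ at the $s=1$ level.

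To promote this to the pro statement, I consider the square
\[\xymatrix{K(X,Y_s)\ar[r]\ar[d]&K(X',Y'_s)\ar[d]\\K(X,Y)\ar[r]&K(X',Y')}\]
of relative $K$-theory spectra. Its total fibre is $K(X,X',Y_s)$ when computed by columns (using Thomason's equivalence on the bottom row), and is (up to a shift) $\op{fib}\bigl(K(Y_s,Y)\to K(Y'_s,Y')\bigr)$ when computed by rows, where the latter involves only the relative $K$-theory of nilpotent thickenings. Hence pro cdh descent for $\Sigma$ is equivalent to the pro equivalence $\{K(Y_s,Y)\}_s\Isoto\{K(Y'_s,Y')\}_s$ of pro spectra.

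To establish this pro equivalence I would cover $X'$ by the $c$ standard affine charts $U_i=\Spec A[I/t_i]$, on each of which the pull-back of $Y_s$ is the principal thickening $V(t_i^s)$. By Zariski (Mayer--Vietoris) descent on $X'$ the question becomes chart-wise: on each Cech piece $U_J=\Spec A[I/t_j:j\in J]$ the comparison should follow from Theorem~\ref{pro_excision_art} applied to the Noetherian excision data $A\to A[I/t_j:j\in J]$ together with its naturally associated principal thickening, and finiteness of the Cech diagram then assembles these chart-wise pro equivalences into the global one.

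The delicate step is setting up this chart-wise excision correctly: the image of $I$ inside $A[I/t_i]$ is not itself an ideal, only its extension $I\cdot A[I/t_i]=t_iA[I/t_i]$ is. Reformulating Theorem~\ref{pro_excision_art} so that $\{K(A,I^s)\}_s$ is compared with $\{K(A[I/t_j:j\in J],(t_j^s))\}_s$ despite this ideal mismatch is the main technical obstacle I would expect, and the regularity of $(t_1,\ldots,t_c)$ is expected to play a crucial role both in verifying the Noetherian Tor-vanishing hypotheses of Theorem~\ref{pro_excision_art} and in controlling the comparison between the two thickening systems.
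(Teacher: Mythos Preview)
Your reduction via Thomason at level $s=1$ to the pro equivalence $\{K(Y_s,Y)\}_s\simeq\{K(Y'_s,Y')\}_s$ is fine, but the chart-wise argument you then propose contains a genuine gap, which you in fact diagnose yourself without resolving. The map $A\to B_i:=A[I/t_i]$ together with the ideal $I\subseteq A$ is \emph{not} an excision situation: the set-theoretic image $f(I)$ is not an ideal of $B_i$ (only its extension $t_iB_i$ is), so Theorem~\ref{pro_excision_art} does not apply. More seriously, the underlying chart-wise assertion --- that $\{K_n(A,I^s)\}_s\to\{K_n(B_i,t_i^sB_i)\}_s$ is a pro isomorphism --- is actually false. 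Already for $A=k[x,y]$, $I=(x,y)$, $B_1=k[x,u]$ with $u=y/x$, one computes that $K_0(A,I^s)\cong 1+I/I^s$ is finite-dimensional over $k$, whereas $K_0(B_1,x^sB_1)$ contains $1+xB_1/x^sB_1\cong k[u]^{s-1}$ and is infinite-dimensional; hence the map cannot be a pro epimorphism. The obstruction is geometric: each chart $U_i$ misses part of the exceptional divisor, and the morphism $U_i\to\Spec A$ is not proper, so no form of pro excision or pro cdh descent is available for it individually. The regularity of the sequence does not help here.

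The paper's proof proceeds by an entirely different device. For each fixed $s$ one introduces the auxiliary ideal $J':=(t_1^s,\dots,t_c^s)$, which is again generated by a regular sequence and is a \emph{reduction ideal} for $J^s$. The reduction-ideal property forces the canonical morphism $X'=\op{Bl}_JA\to\op{Bl}_{J'}A$ to be \emph{finite}, so pro cdh descent for finite abstract blow-ups (the corollary of Theorem~\ref{pro_excision_art} already proved) applies to the upper square; Thomason's theorem applies to the lower square $\op{Bl}_{J'}A\to\Spec A$ since $J'$ is a regular sequence. Comparing the resulting fibre sequences exactly as in the proof of Theorem~\ref{theorem_Weibel} then gives the vanishing. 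The key idea you are missing is thus this interpolation through $\op{Bl}_{J'}A$, which manufactures a finite morphism between two blow-ups to which pro excision genuinely applies.
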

\begin{proof}
By Zariski descent for $K$-theory we may immediately reduce to the case that $X=\Spec A$ is affine and $Y=V(J)$, where $J\subseteq A$ is the ideal defined by a regular sequence $t_1,\dots,t_c\in A$. Fix $s\ge 1$.

The key observation is that $J':=(t_1^s,\dots,t_c^s)$ is a reduction ideal for $J^s$: indeed, this means that there exists $n\ge1$ satisfying $J'J^n=J^{n+1}$, and some simple combinatorics with the exponents shows that $n=c-1$ works. Therefore the canonical map $\op{Bl}_{J^s}A\to\op{Bl}_{J'}A$ is finite by \cite[Thm.~1.5]{Weibel2001}; since the blow-up along an ideal is unchanged by replacing the ideal by any power, this yields a finite morphism $X'=\op{Bl}_{J}A\to\op{Bl}_{J'}A$. Now the previous theorem implies that, for each $n\ge0$, there exists $r\gg 0$ such that the canonical map $K_n(\op{Bl}_{J'}X, X', Z_r)\to K_n(\op{Bl}_{J'}X, X', Z)$ is zero, where $Z:=V(J')\times_A\op{Bl}_{J'}A$.

Now the same comparison of the fibre sequences (together with Thomason's calculation) already used to prove Theorem \ref{theorem_Weibel} shows that the map $K_n(X,X',V(J'^r))\to K_n(X,X',V(J'))$ is zero, which is enough to complete the proof.
\end{proof}

\subsection{Pro cdh descent}
The following was the main result of \cite{Morrow_pro_cdh_descent}:

\begin{theorem}
Let $\Sigma$ be an abstract blow-up square, and assume that $X$ is either
\begin{enumerate}
\item a Noetherian, quasi-excellent $\bb Q$-scheme of finite Krull dimension; or
\item a variety over an infinite perfect field which has strong resolution of singularities.
\end{enumerate}
Then $K$-theory satisfies pro cdh descent with respect to $\Sigma$.
\end{theorem}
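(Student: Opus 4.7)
The plan is to reduce the general case to pro cdh descent for blow-ups along regular immersions (Theorem \ref{theorem_regular_imm}) by combining Noetherian induction on $\dim X$ with strong resolution of singularities, in the spirit of the proof of Theorem \ref{theorem_Weibel}. By Zariski descent for $K$-theory the problem is local on $X$, so I may assume $X=\Spec A$ is affine; both hypotheses (i) and (ii) are preserved under passage to Zariski-opens. The base case $\dim X\le 0$ is vacuous since any proper morphism birational over $X\setminus Y$ is then an isomorphism.

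For the inductive step, apply strong resolution of singularities---Temkin's theorem in case (i), the hypothesis in case (ii)---to the $X$-scheme $X'$. This produces a proper birational morphism $\pi:\widetilde{X}\to X$ which is a finite composition of blow-ups of $X$ along regular centres, each supported over $Y$, and which factors as $\widetilde{X}\xrightarrow{h}X'\xrightarrow{f}X$. Iterating Theorem \ref{theorem_regular_imm} over the stages of $\pi$ (using Zariski descent at each step to stay affine) then yields pro cdh descent for the square $(\widetilde{X}\to X,Y)$, i.e., $\{K_n(X,\widetilde{X},Y_s)\}_s=0$ for every $n\in\bb Z$.

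Next, exploit the factorisation via the cofibre sequence of birelative $K$-theories
\[
K(X,X',Y_s)\To K(X,\widetilde{X},Y_s)\To K(X',\widetilde{X},Y'_s),
\]
which reduces the problem to showing that $\{K_n(X',\widetilde{X},Y'_s)\}_s=0$. But $h:\widetilde{X}\to X'$ itself fits into an abstract blow-up square whose non-iso locus is contained in the preimage in $X'$ of the resolution centres---all of which lie over $Y$---and is in particular supported on a closed subscheme of $X'$ of dimension strictly smaller than $\dim X$. Zariski-localising on $X'$ and invoking the induction hypothesis on this lower-dimensional piece, together with the Artin--Rees lemma to identify the pro systems $\{Y'_s\}_s$ and $\{Y_s\times_X X'\}_s$ up to cofinality, yields the desired vanishing.

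The principal obstacle will be arranging the regular resolution tower $\widetilde{X}\to X$ so that it dominates $X'$, rather than merely resolving the singularities of $X$ and $X'$ independently: in case (ii) this is a standard consequence of applying Hironaka-style strong resolution to the coherent ideal sheaf on $X$ which defines $X'$ as a blow-up, while in case (i) it rests on Temkin's functorial resolution for quasi-excellent $\bb Q$-schemes. Secondary technical points include the Artin--Rees bookkeeping of infinitesimal thickenings under the change of base from $X$ to $X'$, and verifying that the induction hypothesis genuinely applies, in the precise form stated, to each relevant lower-dimensional closed subscheme arising from the exceptional locus of $h$.
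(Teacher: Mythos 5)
Your strategy is not the paper's: the actual proof runs through Haesemeyer's method applied to infinitesimal $K$-theory (the homotopy fibre of $K\to H\!N$, resp.\ of $K\to\{\TC^m(-;p)\}_m$), which satisfies honest cdh descent, thereby reducing pro cdh descent for $K$-theory to pro cdh descent for negative cyclic and topological cyclic homology, and ultimately for Hochschild homology, where Artin--Rees and the formal functions theorem apply. Your proposed direct geometric induction contains two genuine gaps. First, the induction does not close: the residual term $\{K_n(X',\widetilde X,Y'_s)\}_s$ is the birelative $K$-theory of an abstract blow-up square whose \emph{base} is $X'$, and $\dim X'=\dim X$ (since $X'\setminus Y'\cong X\setminus Y$); the fact that the non-isomorphism locus of $h:\widetilde X\to X'$ has smaller dimension does not bring this square within reach of an induction hypothesis indexed by $\dim X$. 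Reindexing the induction by $\dim Y$ does not obviously help either, because one must then compare the pro-system of thickenings $\{Y'_s\}_s$ with the thickenings of the (generally different, smaller) non-isomorphism locus of $h$ inside $X'$, and the statement that pro cdh descent is insensitive to enlarging the closed subscheme is itself an instance of the theorem being proved. This circularity is precisely why the naive Haesemeyer-style induction was not known to work for $K$-theory with infinitesimal thickenings, and why both the paper's argument and Kerz--Strunk--Tamme's proof of Theorem~\ref{theorem_KST} introduce new tools (trace maps to cyclic homology, resp.\ derived blow-ups) rather than running it.

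Second, resolution of singularities alone does not produce the tower you need: for a general proper $f:X'\to X$ that is an isomorphism outside $Y$ (which need not be a blow-up, nor even birational onto its image when $Y=X$), dominating $f$ by a composition of blow-ups of $X$ along regular centres supported in $Y$ requires Raynaud--Gruson flattening to first dominate $f$ by an actual blow-up with centre in $Y$, followed by principalization of that centre --- a strictly stronger input than resolution, and one that is delicate in the quasi-excellent case (i). Even granting the tower, iterating Theorem~\ref{theorem_regular_imm} along it requires a composition lemma for pro cdh squares together with the same change-of-centre comparison as above, since the centre of each intermediate blow-up is a regular subscheme of the previous stage rather than (a thickening of) $Y$ itself. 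A smaller point: your base case reasoning is off, since for $\dim X=0$ one may have $Y=X$ and $f$ far from an isomorphism (e.g.\ $X'=\bb P^1_X$); the statement still holds there, but only because $Y_s=X$ for $s\gg0$, not because $f$ is invertible.
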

\begin{proof}[Notes on the proof]
Haesmeyer's method shows that infinitesimal $K$-theory, defined as the homotopy fibre of either $K\to H\!N$ or $K\to \{\TC^m(-;p)\}_m$ depending on the situation, satisfies cdh descent;\footnote{
I am grateful to Kerz for bringing to my attention that, in case (i), the proof of this assertion in \cite{Morrow_pro_cdh_descent} contains a gap when $X$ is not of finite type over a field. Indeed, the statement of \cite[Prop.~3.6]{Morrow_pro_cdh_descent} needs to be modified by assuming that $\cal E$ commutes with filtered colimits (so that its stalks are computed by its values on local rings) and by adding one of the following additional hypotheses:
\begin{enumerate}
\item either replace ``category of Noetherian quasi-excellent $\bb Q$-schemes'' by ``category of essentially finite type $A$-schemes, where $A$ is a regular excellent $\bb Q$-algebra''; or
\item assume that $\cal E$ satisfies excision for morphisms which are isomorphisms infinitely near a closed subsheme.
\end{enumerate}
Indeed, in case (a) the given proof in op.~cit.~works, since Haesemeyer's argument works over any regular, local, excellent $\bb Q$-algebra $A$ (the key observation being that hypersurfaces in $\bb P_A^d$ are Cohen--Macaulay since $A$ is regular). So, given an arbitrary abstract blow-up square of Noetherian quasi-excellent $\bb Q$-schemes, it follows that the resulting square
\[\xymatrix{\cal E(\Spec\hat\roi_{X,x})\ar[r]\ar[d] & \cal E(X'\times_X\hat\roi_{X,x})\ar[d]\\
\cal E(Y\times_X\hat\roi_{X,x})\ar[r] & \cal E(Y'\times_X\hat\roi_{X,x})
}\]
is homotopy cartesian for each point $x\in X$ (note that the completion $\hat\roi_{X,x}$ is the quotient of a regular, complete $\bb Q$-algebra, so case (a) applies). Under assumption (b) we may immediately replace $\hat\roi_{X,x}$ by $\roi_{X,x}$ since the morphism $\Spec\hat\roi_{X,x}\to\Spec\roi_{X,x}$ is an isomorphism infinitely near the closed point of $\Spec\hat\roi_{X,x}$, thereby fixing the proof.

Finally note that both $K$-theory and Hochschild homology, hence also infinitesimal $K$-theory, satisfy hypothesis (b); so the proof of [op.~cit., Thm.~3.7] then works as stated.

Using this corrected proof, the condition ``quasi-excellent'' in (i) is no longer required.
}
this reduces the problem to showing that negative cyclic homology and topological cyclic homology have suitable pro cdh descent properties. In turn, by a series of further reductions, it is ultimately enough to check that Hochschild homology satisfies pro cdh descent with respect to any abstract blow-up square of Noetherian schemes \cite[Thm.~3.3]{Morrow_pro_cdh_descent}; the proof of this latter result is intimately linked to the Artin--Rees lemma and formal functions theorem for coherent cohomology. 
\end{proof}

However, the previous theorem has been made redundant by the following recent result of Kerz, Strunk, and Tamme \cite{KerzStrunkTamme2016}:

\begin{theorem}[Kerz--Strunk--Tamme]\label{theorem_KST}
Suppose $\Sigma$ is an abstract blow-up square of Noetherian schemes; then $K$-theory satisfies pro cdh descent with respect to $\Sigma$.
\end{theorem}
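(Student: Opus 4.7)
The plan is to reduce the general pro cdh descent statement to the two cases already handled earlier in this note: pro cdh descent for finite morphisms (the corollary of Theorem~\ref{pro_excision_art}) and for blow-ups along regular immersions (Theorem~\ref{theorem_regular_imm}). First I would apply Zariski descent for $K$-theory, which transfers to the pro setting via the Mayer--Vietoris property of the birelative presheaf of spectra $U \mapsto K(U, X' \times_X U, Y_s \times_X U)$, in order to reduce to the case $X = \Spec A$ with $A$ Noetherian. By a standard admissible-blow-up argument one may then arrange that $X' = \op{Bl}_J X$ for some ideal $J \subseteq A$ with $V(J) \subseteq Y$, and, after replacing $Y$ by an infinitesimal thickening, that $Y = V(J)$.

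The crucial new input I would pursue is a homotopy-cartesian pullback square of (pro) spectra, functorial in the blow-up datum, that compares $K(X) \to K(X')$ with the $K$-theory of an auxiliary Milnor square of \emph{Noetherian} rings and that is compatible with the pro system of infinitesimal thickenings of $Y$ and $Y'$. Such a construction---in the spirit of Tamme's pullback square for descent of localising invariants---would bypass the need to find a regular reduction ideal for $J$, which is the essential obstruction in dimensions greater than two and the reason why the ``reduction ideal'' trick underlying both Theorem~\ref{theorem_Weibel} and Theorem~\ref{theorem_regular_imm} fails in general. Once the pullback square is in place, pro excision for the resulting excision situation of Noetherian rings---supplied by Theorem~\ref{pro_excision_art}---immediately yields the vanishing of the birelative pro abelian groups $\{K_n(X, X', Y_s)\}_s$ for every $n \in \bb Z$.

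The main obstacle, and the essential conceptual content of Kerz--Strunk--Tamme's contribution, is the construction of this homotopy-cartesian square. It requires treating $K$-theory as a localising invariant of stable $\infty$-categories and verifying that the derived tensor products entering the pullback interact well with the pro system of infinitesimal thickenings via the Artin--Rees lemma. With the pullback in hand, the proof assembles by the same fibre-sequence comparison used in the proof of Theorem~\ref{theorem_regular_imm}: one compares a fibre sequence associated to a finite, or regular-immersion, piece of the blow-up with one for the total blow-up, concluding via the pro excision input for the Noetherian Milnor square together with Thomason's formula for the $K$-theory of a blow-up along a regular immersion.
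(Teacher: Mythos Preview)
The paper does not give its own proof of this theorem: it is stated as the main result of \cite{KerzStrunkTamme2016} and simply cited, so there is nothing in the paper to compare your argument against line by line. That said, your sketch mischaracterises the actual Kerz--Strunk--Tamme argument in a way that matters.

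Your proposed reduction is to an ``auxiliary Milnor square of Noetherian rings'' to which Theorem~\ref{pro_excision_art} would apply, followed by the fibre-sequence comparison from Theorem~\ref{theorem_regular_imm}. This is not what Kerz--Strunk--Tamme do, and there is no evident way to produce such a Milnor square for a general blow-up: the obstruction you yourself identify (no regular reduction ideal in general) is precisely why one cannot stay in the world of ordinary Noetherian rings. Their genuinely new idea is the \emph{derived blow-up}: given generators $t_1,\dots,t_c$ of the ideal $J\subseteq A$, one forms the derived base change of the blow-up of $\bb A^c$ at the origin along the map $\bb Z[T_1,\dots,T_c]\to A$, $T_i\mapsto t_i$. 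This derived scheme admits a semi-orthogonal decomposition of its perfect complexes (a derived version of Thomason's formula, since the centre in $\bb A^c$ is regularly embedded), which yields a homotopy cartesian square in $K$-theory on the nose. The remaining work is to compare the derived blow-up with the classical $\op{Bl}_JA$; this comparison is controlled by a derived nilpotent thickening supported on the exceptional locus, and it is here---not for a classical Milnor square---that the pro techniques and Artin--Rees enter.

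So the ingredients you name (localising invariants of stable $\infty$-categories, Thomason's formula, Artin--Rees, pro methods) are all present in the real proof, but the architecture is different: the pullback square lives over a derived scheme rather than over an ordinary Noetherian ring, and pro excision in the form of Theorem~\ref{pro_excision_art} is not the mechanism that closes the argument.
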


\section{Continuous $K$-theory of rigid analytic varieties}
In this section $\roi_F$ denotes a complete discrete valuation ring with field of fractions $F$. We assume that the reader is familiar with the elementary theory of rigid analytic $F$-varieties.\footnote{Everything in this section works verbatim for relative rigid spaces over a Noetherian ring complete with respect to an arbitrary ideal \cite[\S5]{BoschLutkebohmert1993}
 .} We will explain how pro cdh descent allows us to propose a definition of the ``continuous'' $K$-theory of a rigid analytic $F$-variety. This is perhaps the simplest manifestation of a fact already well-known among a few experts (c.f., \cite[Prop.~26]{KerzSaitoTamme2016}), namely that pro cdh descent is closely related to the problem of defining and studying the $K$-theory of rigid analytic varieties. Heuristically, this relation is a consequence of Raynaud's theorem \cite{Raynaud1974} that taking rigid analytic generic fibres defines an equivalence of categories
\[\xymatrix{
\txt{quasi-compact, topologically finite-type \\ formal $\roi_F$-schemes, localised by admissible blow-ups}\ar[r]^-\sim & \txt{quasi-compact, quasi-separated \\ rigid analytic $F$-varieties}
}\]
Therefore invariants of rigid analytic varieties arise as invariants of formal $\roi_F$-schemes which are suitably stable by admissible blow-ups.

Our definition of the continuous $K$-theory of a rigid analytic variety begins with the affinoid case:

\begin{definition}\label{definition_K_cts}
For each affinoid $F$-algebra $R$, its {\em continuous $K$-theory} $K^\sub{cts}(R)$ is defined\footnote{Everything in this section would probably remain valid if $\op{holim}_sK(R_0/\pi^sR_0)$ were replaced by the pro spectrum $\{K(R_0/\pi^sR_0)\}_s$, though care would be required with the descent assertions in Theorem~\ref{theorem_cts}.} to be the homotopy pushout of the diagram of spectra
\[\xymatrix{
K(R_0) \ar[r]\ar[d] & K(R)\\
\op{holim}_sK(R_0/\pi^sR_0) & 
}\] where $R_0\subset R$ is any subring of definition (i.e., a open and bounded subring of $R$) and $\pi$ is any topologically nilpotent unit of $F$.
\end{definition}

\begin{lemma}
Up to weak equivalence, the previous definition depends on neither the chosen topologically nilpotent unit $\pi$ nor the chosen subring of definition $R_0$.
\end{lemma}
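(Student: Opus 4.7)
The plan is to handle the two independence claims separately. Independence of the topologically nilpotent unit $\pi$ is essentially formal; independence of the subring of definition $R_0$ is the substantive assertion and follows from the pro excision theorem (Theorem~\ref{pro_excision_art}) together with associativity of homotopy pushouts. The main obstacle is really nothing more than setting up the correct excision situation and verifying cofinality of some pro-systems of ideals, since the heavy lifting is already done by Theorem~\ref{pro_excision_art}.

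For independence of $\pi$: since $F$ is the fraction field of the complete discrete valuation ring $\roi_F$, any two topologically nilpotent units $\pi,\pi'\in F^\times$ differ from positive powers of a uniformiser by units of $\roi_F$, so their powers are mutually cofinal. Hence the chains of ideals $\{\pi^sR_0\}_s$ and $\{\pi'^sR_0\}_s$ of $R_0$ are mutually cofinal, and so the pro spectra $\{K(R_0/\pi^sR_0)\}_s$ and $\{K(R_0/\pi'^sR_0)\}_s$ are pro-isomorphic and have canonically weakly equivalent homotopy limits.

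For independence of $R_0$: I would first reduce to the case of an inclusion $R_0\subseteq R_0'$. Given two subrings of definition, their intersection $R_0\cap R_0'$ is again one (it is visibly a bounded subring, and it is open since boundedness of $R_0$ in the open set $R_0'$ forces $\pi^sR_0\subseteq R_0'$ for $s\gg0$), so it suffices to apply the inclusion case twice. Assuming now $R_0\subseteq R_0'$, boundedness of $R_0'$ yields some $m\ge0$ with $\pi^mR_0'\subseteq R_0$; setting $I:=\pi^mR_0'$, one checks that $I$ is an ideal of both $R_0$ and $R_0'$ (of the former since $R_0\cdot I\subseteq R_0'\cdot I=I$), so that $R_0\into R_0'$ together with $I$ constitutes an excision situation of Noetherian rings in the sense of Section~1.

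Theorem~\ref{pro_excision_art} then provides a pro-weak-equivalence $\{K(R_0,I^s)\}_s\isoto\{K(R_0',I^s)\}_s$; passing to homotopy limits (compatibly with pro-equivalences in the Fausk--Isaksen framework recalled in the introduction) produces a homotopy cartesian square
\[\xymatrix{
K(R_0)\ar[r]\ar[d] & K(R_0')\ar[d]\\
\holim_sK(R_0/I^s)\ar[r] & \holim_sK(R_0'/I^s)
}\]
of spectra. The pro-systems $\{I^s\}_s$ and $\{\pi^sR_0\}_s$ (resp.\ $\{\pi^sR_0'\}_s$) are mutually cofinal inside $R_0$ (resp.\ $R_0'$), so the bottom row identifies with $\holim_sK(R_0/\pi^sR_0)\to\holim_sK(R_0'/\pi^sR_0')$. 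Stability of spectra makes this square equally homotopy cocartesian, and associativity of homotopy pushouts then yields the weak equivalence
\[K(R)\cup^h_{K(R_0')}\holim_sK(R_0'/\pi^sR_0')\simeq K(R)\cup^h_{K(R_0)}\holim_sK(R_0/\pi^sR_0),\]
which is precisely the claimed independence of $R_0$.
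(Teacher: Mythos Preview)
Your proof is correct and follows essentially the same route as the paper: reduce to an inclusion of subrings of definition via intersection, identify a common ideal defining cofinal $\pi$-adic filtrations on both rings, and invoke pro excision (Theorem~\ref{pro_excision_art}) to obtain the required homotopy cartesian square. The only cosmetic difference is that the paper takes the inclusion in the other direction ($R_0'\subseteq R_0$) and works directly with the ideals $\pi^sR_0$ rather than fixing a single $I=\pi^mR_0'$ and using its powers; your phrasing via associativity of homotopy pushouts is equivalent to the paper's homotopy cartesian square formulation in the stable setting.
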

\begin{proof}
The independence on $\pi$ is clear, since if $\varpi$ were another topologically nilpotent unit, then the chains of ideals $\pi^sR_0$ and $\varpi^sR_0$ ($s\ge1$) would be intertwined.

%; then it is well-known that $R_0''$ is also a subring of definition, and the precise assertion to be proved is that the canonical maps \[\op{holim}_sK(R_0/\pi^sR_0)\longleftarrow\op{holim}_sK(R_0''/\pi^sR_0'')\To \op{holim}_sK(R_0'/\pi^sR_0')\] are weak equivalences. After replacing $R_0'$ by $R_0''$ we may make the notation simpler by assuming without loss of generality that $R_0'\subseteq R_0$.

As we shall now see, the independence on the subring of definition is in fact a case of pro excision for $K$-theory. Let $R_0'\subset R$ be an alternative subring of definition, which we may assume is contained inside $R_0$ (otherwise replace $R_0'$ by $R_0'\cap R_0$, which is well-known to still be a subring of definition). We must prove that the diagram
\[\xymatrix{
K(R_0') \ar[r]\ar[d] & K(R_0)\ar[d]\\
\op{holim}_sK(R_0'/\pi^sR_0')\ar[r] & \op{holim}_sK(R_0/\pi^sR_0)
}\]
is homotopy cartesian.

Since $R_0'$ is open and $R_0$ is bounded in $R$, there is an inclusion $\pi^s R_0\subseteq R_0'$ for $s\gg0$; moreover, the chains of ideals $\{\pi^s R_0\}_{s\gg0}$ and $\{\pi^s R_0'\}_{s\ge0}$ of $R_0'$ are intertwined (since $R_0'$ is bounded and each $\pi^sR_0$ is open). Thus we may replace the bottom left term in the above diagram by $\op{holim}_{s\gg0}K(R_0'/\pi^sR_0)$, and so the homotopy cartesianness of the diagram is equivalent to the canonical map \[\op{holim}_sK(R_0',\pi^sR_0)\To \op{holim}_sK(R_0,\pi^sR_0)\] being a weak-equivalence; but this follows from the overall implication of Theorem~\ref{pro_excision_art}.
%But $\pi R_0$, viewed as a non-unital ring, is pro Tor-unital in the sense of Def.~1.1 of my Crelle paper (by Eg.~1.4 of op.~cit.), and so Geisser--Hesselholt's pro version of the Suslin--Wodzicki excision theorem implies that $\{K_n(R_0',\pi^sR_0)\}_s\to \{K_n(R_0,\pi^sR_0)\}_s$ is an isomorphism of pro-abelian groups for each $n\in\bb Z$.
\end{proof}

\begin{remark}
Readers who wish for $K^\sub{cts}(R)$ to be unambiguously defined (i.e., not merely up to weak equivalence) should take $\op{hocolim}_{R_0}$ of the current definition, where $R_0$ varies over the filtered system of all subrings of definition of $R$.
\end{remark}

Given a qcqs rigid analytic variety $X$, it may be covered (in the admissible topology) by finitely many open affinoids and we may then attempt to define the continuous $K$-theory of $X$ by gluing the above $K^\sub{cts}$ of each affinoid; the forthcoming theorem (which crucially needs pro cdh descent) shows that the result of this process is well-defined, i.e., independent of the chosen cover; we first prove the essential lemma in the case of a two element admissible cover of an affinoid:

\begin{lemma}\label{lemma_MV}
Let $U=\op{Sp}R$ be an affinoid $F$-variety, and $\{U^1=\op{Sp}R^1,\,U^2=\op{Sp}R^2\}$ an open cover of $U$ by two rational subdomains; set $R^3:=R^1\hat \otimes_RR^2$ ($:=F\otimes_{\roi_F}$ of the $\pi$-adic completion of $R_0^1\otimes_{R_0}R_0^2$, where $R_0,R_0^1,R_0^2$ are any subrings of definition of $R,R_0^1,R_0^2$ respectively). Then the square
\[\xymatrix{
K^\sub{cts}(R)\ar[d]\ar[r] & K^\sub{cts}(R^2)\ar[d]\\
K^\sub{cts}(R^1)\ar[r] & K^\sub{cts}(R^3)
}\]
is homotopy cartesian.
\end{lemma}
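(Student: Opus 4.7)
The plan is to lift the rational cover $\{U^1,U^2\}$ to a Zariski cover of an affine formal model after an admissible blow-up, reducing the Mayer--Vietoris assertion for $K^\sub{cts}$ to Zariski descent for algebraic $K$-theory, with pro cdh descent bridging the affinoid and scheme sides. By the standard theory of formal models of rational covers (a consequence of Raynaud's theorem), there exist a subring of definition $R_0\subseteq R$, elements $f_i,g_i\in R_0$ ($i=1,2$), and an ideal $J\subseteq R_0$ containing a power of $\pi$, such that the blow-up $\tilde X:=\op{Bl}_J(\Spec R_0)$ is Zariski-covered by affine charts $\tilde U^i=\Spec R_0[f_i/g_i]$ with intersection $\tilde U^3=\Spec R_0[f_3/g_3]$, and the $\pi$-adic completion $R_0^i:=\hat{R_0[f_i/g_i]}$ is a subring of definition of $R^i$ (and similarly $R_0^3$ of $R^3$). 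By the already-established independence of $K^\sub{cts}$ on the subring of definition, each $K^\sub{cts}(R^i)$ may then be computed with these $R_0^i$.

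Applying Kerz--Strunk--Tamme pro cdh descent (Theorem~\ref{theorem_KST}) to the abstract blow-up square $\tilde X\to\Spec R_0$ centred on $V(J)$, together with cofinality of $\{J^s\}$ and $\{\pi^sR_0\}$ as filtrations of $R_0$, yields (after taking $\op{holim}_s$) a homotopy cartesian square whose corners are $K(R_0)$, $K(\tilde X)$, $\op{holim}_sK(R_0/\pi^sR_0)$, and $\op{holim}_sK(\tilde X\otimes_{R_0}R_0/\pi^s)$. Pasting this (co)cartesian square onto the pushout defining $K^\sub{cts}(R)$, and using $\tilde X[1/\pi]=\Spec R$, identifies
\[K^\sub{cts}(R)\simeq K(R)\cup^h_{K(\tilde X)}\op{holim}_sK(\tilde X\otimes_{R_0}R_0/\pi^s).\]
Each of the three constituent spectra satisfies Zariski descent on $\tilde X$ (for $K$-theory this is classical; for $\op{holim}_s$ one applies Zariski descent levelwise), hence so does the pushout. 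Applied to $\tilde X=\tilde U^1\cup\tilde U^2$, this yields a homotopy cartesian square
\[K^\sub{cts}(R)\simeq K^\sub{cts}(\tilde U^1)\times^h_{K^\sub{cts}(\tilde U^3)}K^\sub{cts}(\tilde U^2),\]
where the scheme-theoretic analogue $K^\sub{cts}(\tilde U^i):=K(R[f_i/g_i])\cup^h_{K(R_0[f_i/g_i])}\op{holim}_sK(R_0^i/\pi^s)$ appears on the right.

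It remains to identify $K^\sub{cts}(\tilde U^i)\simeq K^\sub{cts}(R^i)$, which I expect to be the main obstacle. The ``special fibre'' terms agree on the nose via $R_0[f_i/g_i]/\pi^s=R_0^i/\pi^s$. For the other terms, one argues that the localisation fibre sequences $K_{\{\pi=0\}}\to K(-)\to K(-[1/\pi])$ for the scheme $\tilde U^i$ and for $\op{Spf}R_0^i$ have canonically identified support $K$-theories (by d\'evissage for $\pi$-power-torsion coherent modules, which depends only on the pro-system $\{R_0^i/\pi^s\}$), so the cofibres of $K(R_0[f_i/g_i])\to K(R_0^i)$ and of $K(R[f_i/g_i])\to K(R^i)$ coincide; a cofibre-sequence calculation then shows that the induced map of homotopy pushouts $K^\sub{cts}(\tilde U^i)\to K^\sub{cts}(R^i)$ is an equivalence, completing the proof. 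The subtlety is that individually, neither $K(R_0[f_i/g_i])\to K(R_0^i)$ nor $K(R[f_i/g_i])\to K(R^i)$ need be an equivalence (since $\pi$-adic completion can alter $K$-theory), so the equivalence must be established at the pushout-diagram level via d\'evissage or a pro-excision argument for the completion map, rather than term by term.
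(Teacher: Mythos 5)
Your proposal is correct and follows essentially the same route as the paper's proof: lift the rational cover to a Zariski cover of an admissible blow-up of an affine formal model via Raynaud, use Kerz--Strunk--Tamme pro cdh descent to pass between the formal model and the blow-up, apply Zariski descent to the blow-up, and identify the local pieces with $K^\sub{cts}(R^i)$ via the invariance of the fibre of $K(-)\to K(-[\tfrac1\pi])$ under the analytic isomorphism $R^i_-\to R^i_0$ along $\pi$. The only caveat is that this last identification should be justified by the Thomason--Trobaugh localisation sequence with supports rather than by d\'evissage proper (which is a $G$-theory statement and would require regularity of the charts), but this is precisely the ``standard'' argument the paper itself invokes at the corresponding point.
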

\begin{proof}
Fix a subring of definition $R_0\subseteq R$. Raynaud's equivalence recalled at the beginning of the section is known to be compatible with the Zariski topology on the left and the admissible topology on the right \cite[Lem.~4.4]{BoschLutkebohmert1993}; this equivalence concretely implies that there exists a blow-up $X\to\Spec R$ with centre $\subseteq V(\pi R_0)$, and affine opens $\Spec R^1_-, \Spec R^2_-\subseteq X$ such that $R^i_0:=\hat{R^i_-}$ is a subring of definition for $R^i$ ($i=1,2,3$; here we have set $\Spec R^3_-:=\Spec R^1_-\cap \Spec R^2_-$).\footnote{To prove this, apply \cite[Lem.~4.4]{BoschLutkebohmert1993} to obtain an admissible blow-up of $\op{Spf}R_0$ which has an open cover whose rigid analytic generic fibre is the admissible cover $\{U_1,U_2\}$, and then recall that any admissible blow-up of $\op{Spf}R_0$ is (essentially by definition) the formal scheme associated to a blow-up of $\Spec R_0$ with centre $\subseteq V(\pi R_0)$.
 
For example, suppose that $f,g\in R_0$ are elements which generate an open ideal of $R_0$, so that there is a corresponding open cover of $\op{Sp}R$ by the rational subdomains $\op{Sp}R\langle\tfrac fg\rangle$ and $\op{Sp}R\langle\tfrac gf\rangle$. Then we may take $X$ to be the blow-up of $R_0$ along the ideal $(f,g)$, and $\Spec R^1_-, \Spec R^2_-$ to be the usual two affine patches of $X$, namely $R^1_-=R_0[\tfrac fg]/(\text{$g$-power torsion}$) and $R^2_-=R_0[\tfrac gf]/(\text{$f$-power torsion}$).}
 
We now apply Zariski descent and pro cdh descent to make the following conclusions:
\begin{enumerate}
\item For each $i=1,2,3$, the left and right squares in the commutative diagram
\[\xymatrix{
\op{holim}_sK(R_-^i,\pi^sR_-^i)\ar[r]\ar[d] & K(R^i_-)\ar[r]\ar[d]& K(R^i_-[\tfrac1\pi])\ar[d] \\
\op{holim}_sK(R_0^i,\pi^sR_0^i)\ar[r] & K(R^i_0)\ar[r]& K(R^i)
}\]
are homotopy cartesian: the left simply because $R^i_-/\pi^s\isoto R^i_0/\pi^s$ for each $s\ge1$, and the right by a standard.... Since the cofibre of the composition of the bottom arrows is $K^\sub{cts}(R^i)$ (by Definition \ref{definition_K_cts}), it follows that the same is true of the composition of the top arrows, i.e., the natural sequence \[\op{holim}_sK(R_-^i,\pi^sR_-^i)\To K(R_-^i[\tfrac1\pi])\To K^\sub{cts}(R)\] is a fibre sequence.
\item Next, Zariski descent implies that the squares
\[\xymatrix{
\op{holim}_sK(X,X\times_{R_0}\pi^sR_0)\ar[r]\ar[d] & \op{holim}_sK(R^1_-,\pi^sR^1_-)\ar[d] \\
\op{holim}_sK(R^2_-,\pi^sR^2_-)\ar[r] & \op{holim}_sK(R^3_-,\pi^sR^3_-)
}\quad
\xymatrix{
K(R)\ar[r]\ar[d] & K(R_-^1[\tfrac1\pi])\ar[d] \\
K(R_-^2[\tfrac1\pi])\ar[r] & K(R_-^3[\tfrac1\pi])
}
\]
are homotopy cartesian.
\item Pro cdh descent (Theorem \ref{theorem_KST}) implies that \[\holim_s K(R_0,\pi^sR_0)\quis \holim_s K(X,X\times_{R_0}\pi^sR_0);\] combined with Definition \ref{definition_K_cts} for $R$ this yields a fibre sequence \[\holim_s K(X,X\times_{R_0}\pi^sR_0)\To K(R)\To K^\sub{cts}(R)\]
\end{enumerate}
Comparing the fibre sequences of (i)\&(iii) with the homotopy cartesian squares of (ii) reveals that the desired square (\dag) is indeed homotopy cartesian.
\end{proof}

\begin{theorem}\label{theorem_cts}
There is a presheaf of spectra \[K^\sub{cts}:\text{\rm quasi-compact, quasi-separated rigid analytic $F$-varieties}\To\text{\rm Spectra}\] which satisfies the following two properties:
\begin{enumerate}
\item $K^\sub{cts}(\op{Sp} R)$ is weakly equivalent to $K^\sub{cts}(R)$, for each affinoind $F$-algebra $R$.
\item $K^\sub{cts}$ satisfies Mayer--Vietoris descent for the admissible topology, i.e., whenever $\{U_1,U_2\}$ is an admissible open cover of a qcqs rigid analytic variety $X$, then the square
\[\xymatrix{
K^\sub{cts}(X)\ar[d]\ar[r] & K^\sub{cts}(U_2)\ar[d]\\
K^\sub{cts}(U_1)\ar[r] & K^\sub{cts}(U_1\cap U_2)
}\]
is homotopy cartesian.

\end{enumerate}
Moreover, the presheaf of spectra  $K^\sub{cts}$ is unique in a suitable sense, e.g., $\infty$-categorically.
\end{theorem}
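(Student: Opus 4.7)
The plan is to construct $K^\sub{cts}$ as the hypersheafification on the admissible site of the presheaf on affinoids defined by $\op{Sp} R \mapsto K^\sub{cts}(R)$ from Definition~\ref{definition_K_cts}; equivalently, as the $\infty$-categorical right Kan extension of this functor along the inclusion of affinoids into qcqs rigid analytic $F$-varieties. Property (i) will then be automatic provided the affinoid presheaf is already a hypersheaf for the admissible topology on affinoids, and property (ii) will follow because admissible covers of qcqs rigid varieties admit finite affinoid refinements.

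The main step is therefore to upgrade Lemma~\ref{lemma_MV}---Mayer--Vietoris descent for a two-element cover by rational subdomains---to \v Cech descent for every finite admissible affinoid cover of an affinoid. By Gerritzen--Grauert, any such cover is refined by a finite cover by rational subdomains, so it suffices to treat the rational case. For a rational cover $\{U_1,\dots,U_n\}$ of an affinoid $U$, one proceeds by induction on $n$: the base case $n=2$ is Lemma~\ref{lemma_MV}, and the inductive step applies the lemma to the two-element decomposition $\{U_1,\,U_2\cup\cdots\cup U_n\}$, extending $K^\sub{cts}$ to the (qcqs but generally non-affinoid) admissible open $U_2\cup\cdots\cup U_n$ via the inductive hypothesis (which applies, since $\{U_i\cap U_n\}_{i\neq 1}$ is a finite rational-subdomain cover of $U_n$, and analogously for $U_1\cap(U_2\cup\cdots\cup U_n)$).

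To globalise, for a qcqs rigid variety $X$ with finite admissible affinoid cover $\{V_i\}$ define
\[
K^\sub{cts}(X) \;:=\; \holim_{[m]\in\Delta} \prod_{i_0<\cdots<i_m} K^\sub{cts}(V_{i_0}\cap\cdots\cap V_{i_m}),
\]
where the qcqs multi-intersections are handled by recursive \v Cech resolution against further affinoid covers. The affinoid hypersheaf property of the previous step ensures that the result is independent of the chosen cover and satisfies (ii). For uniqueness: any presheaf of spectra on qcqs rigid $F$-varieties satisfying (i)--(ii) is, by (ii), hypercomplete with respect to finite admissible covers and thus determined on affinoids by \v Cech descent; on affinoids it agrees with our construction by (i); a standard $\infty$-categorical Kan-extension argument then produces the canonical equivalence with the right Kan extension above, in the $\infty$-category of spectrum-valued hypersheaves. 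The main technical obstacle will be executing the inductive step above coherently at the level of $\infty$-categories: the local formal-model inputs from the proof of Lemma~\ref{lemma_MV} must be shown to assemble into homotopy-coherent descent data as $n$ grows, so that the resulting \v Cech tower really totalizes to $K^\sub{cts}(U)$. Once this coherence is in place, everything else is formal.
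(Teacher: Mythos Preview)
Your proposal is correct and follows essentially the same route as the paper: both construct $K^\sub{cts}$ by hypercohomology/right Kan extension from the affinoid values, reduce property (i) to the two-element descent of Lemma~\ref{lemma_MV}, and obtain (ii) and uniqueness formally from the sheafification framework. Your explicit invocation of Gerritzen--Grauert and the induction on the size of a rational cover spells out a step the paper leaves implicit in its appeal to Thomason's hypercohomology machinery, and the paper is candid that its own argument is a sketch whose coherence details ``may [be] present[ed] elsewhere''---exactly the technical obstacle you flag at the end.
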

\begin{proof}
The paragraph proceeding Lemma \ref{lemma_MV} explains why $K^\sub{cts}$ is  determined by (i) and (ii); we will not attempt to make this uniqueness assertion more precise here, as it most likely requires introducing an $\infty$-category of presheaves of spectra in order to keep track of the various higher coherences between homotopies.

Various hypercohomology constructions (e.g., the \v Cech or Godement approaches of Thomason \cite[\S1]{Thomason1985}) allow us to define $K^\sub{cts}(X)$, where $X$ is an arbitrary qcqs rigid analytic variety, in terms of a homotopy limit involving $K^\sub{cts}(R)$ where $\{\op{Sp}R\}$ varies over open affinoid covers of $X$. The resulting presheaf of spectra $X\mapsto K^\sub{cts}(X)$ satisfies (ii) by a standard theorem concerning hypercohomology (e.g., \cite[Thm.~1.46]{Thomason1985}), while (i) reduces to Lemma \ref{lemma_MV}. Although many readers are likely to be convinced by this sketch, we may present the details elsewhere since there seems to be little in the literature concerning the relation between different hypercohomology constructions in the case of the admissible site of a rigid analytic variety.\footnote{
We also remark that the following more highbrow point of view seems to be possible. As mentioned at the beginning of the proof of Lemma \ref{lemma_MV}, Raynaud's equivalence is compatible with the Zariski and admissible topologies; the most natural statement of this equivalence is an identification of the topos of sheaves on a rigid analytic variety as the limit of a fibred topos (in the sense of Grothendieck \cite[\S VI]{SGA_IV_II}) of sheaves on formal schemes \cite[Thm.~4.5.12]{Abbes2010}. An extension of this result to presheaves of spectra would yield a correspondence 
\xymatrix{\txt{cocartesian presheaves of spectra on qc tft \\ formal $\roi_F$-schemes satisfying Zariski descent.} \ar@{<->}[r] & 
\txt{presheaves of spectra on rigid analytic $F$-varieties \\ satisfying admissible descent}
}

An object of the left of this correspondence is the data of a presheaf of spectra $\frak X\mapsto \cal E(\frak X)$ which satisfies not only Zariski descent but also cocartesianess for each admissible blow-up $\pi:\frak X'\to\frak X$, i.e., the resulting adjoint $\cal E_{\frak X'}\to R\pi_*\cal E_{\frak X}$ (where $\cal E_\frak X$ is the restriction of $\cal E$ to $\frak X_\sub{Zar}$, and similarly for $\frak X'$) is a local equivalence. In defining such an presheaf of spectra we may restrict to those formal schemes $\frak X=\hat {\cal X}$ which arise as the completion of finite-type $\roi_F$-schemes $\cal X$; we leave it to the interested reader to check that the cocartesinness for the presheaf ``$\cal X\mapsto$ the homotopy pushout of
\[\xymatrix@=5mm{
K(\cal X)\ar[r]\ar[d] & K(\cal X\times_{\roi_F}F)\\
\holim_sK(\cal X\times_{\roi_F}\roi_F/\pi^s\roi_F)&
}\text{''}\]
is consequence of pro cdh descent, thereby inducing $K^\sub{cts}$ on the right side of the correspondence.
}
\end{proof}

\bibliographystyle{acm}
\bibliography{../Bibliography}

\end{document}